\newcolumntype{C}[1]{>{\centering\arraybackslash$}p{#1}<{$}}
\newtheorem{theorem}{Theorem}[section]
\newtheorem{cor}[theorem]{Corollary}
\newtheorem{lemma}[theorem]{Lemma}
\newtheorem{prop}[theorem]{Proposition}
\newtheorem{definition}{Definition}
\newtheorem{example}{Example}
\newtheorem{remark}{Remark}
\newcommand{\Ga}{\alpha}
\newcommand{\Gb}{\beta}
\newcommand{\Gs}{\sigma}
\newcommand{\GO}{\Omega}
\newcommand{\FF}{\mathbb{F}}
\newcommand{\RR}{\mathbb{R}}
\newcommand{\CC}{\mathbb{C}}
\newcommand{\NN}{\mathbb{N}}
\newcommand{\ZZ}{\mathbb{Z}}
\newcommand{\Om}{\Omega}
\newcommand{\ds}{\displaystyle}
\newcommand{\pf}{\noindent {\sl Proof}. \ }
\newcommand{\p}{\partial}
\newcommand{\pd}[2]{\frac {\p #1}{\p #2}}
\newcommand{\eqnref}[1]{(\ref {#1})}
\renewcommand{\qed}{\hfill $\Box$ \medskip}
\newcommand{\beq}{\begin{equation}}
\newcommand{\eeq}{\end{equation}}
\newcommand{\be}{\begin{equation*}}
\newcommand{\ee}{\end{equation*}}
\newcommand{\Kcal}{\mathcal{K}}
\newcommand{\Scal}{\mathcal{S}}
\newcommand{\la}{\langle}
\newcommand{\ra}{\rangle}
\numberwithin{equation}{section}
\numberwithin{figure}{section}
\numberwithin{table}{section}
\numberwithin{example}{section}
\numberwithin{remark}{section}
\numberwithin{definition}{section}
\def\arraystretch{1.3}
\begin{document}

\newcommand{\TheTitle}{Geometric multipole expansion and its application to semi-neutral inclusions of general shape}
\newcommand{\TheAuthors}{D. Choi, J. Kim and M. Lim}

\title{{\TheTitle}\thanks{This work was supported by the National Research Foundation of Korea(NRF) grant funded by the Korea government(MSIT) (NRF-2016R1A2B4014530 and NRF-2019R1F1A1062782).}}
\author{Doosung Choi\thanks{\footnotesize Department of Mathematical Sciences, Korea Advanced Institute of Science and Technology, Daejeon 34141, Korea ({7john@kaist.ac.kr}, {kjb2474@kaist.ac.kr},  {mklim@kaist.ac.kr}).}\and Junbeom Kim\footnotemark[2] \and Mikyoung Lim\footnotemark[2]}
\date{\today}
\maketitle
\begin{abstract}
This paper presents a new concept of geometric multipole expansion for the conductivity or anti-plane elasticity problem in two dimensions by using the Faber polynomials. As an application, we construct semi-neutral inclusions of general shape that show relatively negligible field perturbations for low-order polynomial loadings. These inclusions are of the multilayer structure whose material parameters are determined such that some coefficients of geometric multipole expansion vanish.
\end{abstract}

\noindent {\footnotesize {\bf Mathematics Subject Classification.} {35J05; 74B05; 65B99} }

\noindent {\footnotesize {\bf Keywords.} 
{Geometric multipole expansion; Faber polynomials; Semi-neutral inclusion; Multi-coated structure; Anti-plane elasticity}}

\section{Introduction}

We consider the field perturbation due to the presence of an elastic or electrical inclusion in a homogeneous background $\RR^2$. An elastic or electrical inclusion with different material parameters from that of the background induces a perturbation on the applied background field. 
For this conductivity transmission problem, one can find the solution using a single-layer potential ansatz, where the density function involves the so-called Neumann--Poincar\'{e} operator. This boundary integral formulation provides us the classical multipole expansion of the field perturbation, whose coefficients are the so-called generalized polarization tensors (GPTs) \cite{Ammari:2013:MSM, Ammari:2007:PMT}.
The classical multipole expansion holds in a far-field region, but, in general, it does not hold near the boundary of the inclusion. Consequently, the classical multipole expansion cannot be employed to find the solution to the transmission problem; it provides a solution only when the inclusion is a circular or spherical domain.

In this paper, to overcome the limitation of the classical multipole expansion, we propose a geometric multipole expansion applicable to solving the conductivity transmission problem with an inclusion of general shape.
We assume that the inclusion is either a simply connected or multilayered domain whose layers are enclosed by images of concentric circles via the exterior conformal mapping of the core. We refer the reader to Figure \ref{fig:multicoated} in Section \ref{sec:FPT_reducing} for the geometry of such a multilayered domain.

Complex analysis techniques have been used to study various inclusion problems in two dimensions \cite{Ammari:2018:MNF, Bonnetier:2012:PBG, Choi:2018:CEP, Jung:2021:SEL}. 
For any simply connected region, there uniquely exists an exterior conformal mapping. The Faber polynomials are then defined depending on the exterior conformal mapping \cite{Faber:1903:PE}, where they form a basis for analytic functions in the region \cite{Duren:1983:UF}. Recently, Jung and Lim \cite{Jung:2021:SEL} obtained series expansions of the layer potential operators based on geometric function theory; we refer the reader to \cite{Choi:2021:EEC, Jung:2020:DEE} for their applications.

As the main results, we introduce the geometric multipole expansion by using the Faber polynomials. Unlike the classical multipole expansion, this expansion holds in the whole exterior of the inclusion, and, consequently, one can solve the transmission problem using this expansion. We then define the Faber polynomial polarization tensors (FPTs) that are coefficients of the geometric multipole expansion. The FPTs coincide with the GPTs for the disk case, and, in general, are linear combinations of the GPTs with weights determined by the Faber polynomials. We provide a matrix expression for the FPTs in terms of the material parameter and the exterior conformal mapping of the inclusion. It is worth remarking that the FPTs was successfully applied for analytical shape recovery of a conductivity inclusion \cite{Choi:2020:ASR}.

Coated disks and spheres are well-known examples of neutral inclusions, that is, structures not disturbing the applied uniform field \cite{Hashin:1962:EMH, Hashin:1985:LIE, Hashin:1962:VAT, Jimenez:2013:NNI}. 
Appropriately coated ellipses and ellipsoids, possibly with the anisotropic conductivity, are neutral to all uniform exterior fields \cite{Grabovsky:1995:MME1, Kerker:1975:IB, Milton:2002:TC, Sihvola:1999:EMF, Sihvola:1997:DPI}, and they are the only shapes for which coated inclusions have the uniform field property \cite{Kang:2014:CIF, Kang:2016:OBV, Milton:2001:NCI}. The idea of neutral inclusion has been widely studied for the invisible cloaking using metamaterials. For instance, Zhou et al.~designed multi-coated spheres that are invisible to acoustic, elastic, and electromagnetic waves \cite{Zhou:2006:DEW, Zhou:2007:AWT, Zhou:2008:EWT}. After then, Landy and Smith \cite{Landy:2013:FPU} experimentally characterized the neutral inclusions with microwaves. For the case of Maxwell's equations, Al\`{u} and Engheta \cite{Alu:2005:ATP} and Ammari et al.~\cite{Ammari:2013:ENCm} constructed multi-coated neutral inclusions.
 The GPT-vanishing structures are concentric disks or balls whose values of the GPTs are negligible for leading orders \cite{Ammari:2013:ENC1, Wang:2013:MNC}.  One can interpret them as multi-coated neutral inclusions.
It is worth remarking that inclusions of general shapes that cancel the first-order GPTs were constructed \cite{Feng:2017:CGV, Kang:2019:CWN}.

As an application of the FPTs, we construct multi-coated inclusions, for a given core of general shape, that show relatively negligible field perturbations for low-order polynomial loadings. We call such a structure a {\it semi-neutral inclusion}. 
The coating layers of this inclusion are images of concentric circles via the exterior conformal mapping of the core. 
The FPTs can be divided into two groups $\FF^{(1)}$ and $\FF^{(2)}$ (see Theorem \ref{F1F2ab} in Section \ref{FPTGME}); the first mainly depends on the shape of the inclusion, and the second more depends on the material parameters.
For concentric disks, $\FF^{(1)}=0$ due to the symmetry of the shape \cite{Ammari:2013:ENC1}. Hence, the GPT-vanishing structures obtained in \cite{Ammari:2013:ENC1} are in fact the $\FF^{(2)}$-vanishing structures of concentric multi-coated disks. In general, $\FF^{(2)}$ shows a larger magnitude compared to $\FF^{(1)}$, and $\FF^{(2)}$ significantly contributes to the field perturbation. We numerically find semi-neural inclusions such that $\FF^{(2)}$-terms of leading orders vanish by a simple optimization procedure.

The paper is organized as follows. In Section \ref{sec:pre}, we review the boundary integral formulation for the transmission problem and outline the series expansions of the layer potential operators. In Section \ref{sec:cMPE}, we define the GPTs and the classical multipole expansion, and then in Section \ref{FPTGME}, we expand these concepts to inclusions of general shape by the FPTs and the geometric multipole expansion. Section \ref{sec:FPT_reducing} is to analytically compute matrix formulas of FPTs for a multi-coated structure. By using this formula, we then construct semi-neutral inclusions and show numerical examples in Section \ref{sec:numerical}.


\section{Preliminary}\label{sec:pre}

\subsection{Layer potential technique for the conductivity transmission problem}

Let $D$ be a bounded and simply connected domain in $\RR^2$ with Lipschitz boundary. We assume that $D$ has the constant conductivity $\sigma_0>0$ and is embedded in the background with the constant conductivity $\sigma_m$. For simplicity, we assume $\sigma_m=1$. We consider the resulting conductivity (or anti-plane elasticity) transmission problem in two dimensions:
\beq\label{cond_eqn0}
\begin{cases}
	\ds\nabla\cdot\sigma\nabla u=0\quad&\mbox{in }\RR^2, \\
	\ds u(x) - H(x)  =O({|x|^{-1}})\quad&\mbox{as } |x| \to \infty
\end{cases}
\end{equation}
with the conductivity distribution given by 
$\sigma = \sigma_0\, \chi(D)+\chi(\RR^2\setminus\overline{D})$ and an entire harmonic function $H$.
Here, $\chi$ indicates the characteristic function. It holds that 
\beq\label{u:trans}
u\big|^+=u\big|^- ,\quad \pd{u}{\nu}\Big|^+= \sigma_0 \pd{u}{\nu}\Big|^-\quad\mbox{on }\p D.
\eeq
The symbols $+$ and $-$ indicate the limits from the exterior and interior of $\p D$, respectively.

For $\varphi\in L^2(\p D)$, we define
\begin{align*}
\ds&\Scal_{\p D}[\varphi](x)=\int_{\p D}\Gamma(x-\tilde{x})\varphi(\tilde{x})\, d\sigma(\tilde{x}),\quad x\in\RR^d,\\
\ds &\Kcal_{\p D}^*[\varphi](x)=p.v.\,\frac{1}{2\pi}\int_{\partial D}\frac{\left\la x-\tilde{x},\nu_x\right\ra}{|x-\tilde{x}|^2}\varphi(\tilde{x})\, d\sigma(\tilde{x}),\quad x\in\p D,
\end{align*}
where  $\Gamma(x)$ is the fundamental solution to the Laplacian, i.e., 
$\Gamma(x)=\frac{1}{2\pi}\ln|x|$,  $p.v.$ stands for the Cauchy principal value, and $\nu_x$ is the outward unit normal vector to $\p D$ at $x$. We call $\Scal_{\p D}[\varphi]$ and $\Kcal^*_{\p D}$ the single-layer potential
and the Neumann--Poincar\'{e} (NP) operator, respectively.  On $\p D$, the following jump relation holds:
\beq \label{eqn:Kstarjump}
\begin{aligned}
\Scal_{\p D}[\varphi]\big|^{+}&=\Scal_{\p D}[\varphi]\big|^{-},\\
\frac{\partial}{\partial\nu}\Scal_{\p D}[\varphi]\Big|^{\pm}&=\left(\pm\frac{1}{2}I+\Kcal^*_{\p D}\right)[\varphi].
\end{aligned}
\eeq
The $L^2$ adjoint of $\Kcal^*_{\p D}$ is 
\be
\ds \Kcal_{\p D}[\varphi](x)=p.v.\,\frac{1}{2\pi}\int_{\partial D}\frac{\left\la \tilde{x}-{x},\nu_{\tilde{x}}\right\ra}{|x-\tilde{x}|^2}\varphi(\tilde{x})\, d\sigma(\tilde{x}),\quad x\in\p D.
\ee
We also call $\Kcal_{\p D}$ the NP operator by an abuse of terminology. 
The operator $\Kcal^*_{\p D}$ can be extended to act on the Sobolev space $H^{-1/2}(\p D)$ by using its $L^2$ pairing with $\Kcal_{\p D}$. 
We identify $x=(x_1,x_2)$ in $\RR^2$ with the complex variable $z=x_1+ix_2$ in $\CC$. 
We denote
$\Scal_{\p D}[\varphi](z):=\Scal_{\p D}[\varphi](x)$ and similarly for the NP operators.

From \eqnref{u:trans} and \eqnref{eqn:Kstarjump}, the solution $u$ to \eqnref{cond_eqn0} admits the single-layer potential ansatz:
\beq\label{umh1}
u(x)=H(x)+\Scal_{\p D}[\varphi](x),\quad x\in\RR^2,
\eeq
where
\beq\label{umh2}
\varphi=(\lambda I-\Kcal_{\p D}^*)^{-1}\left[\nu\cdot \nabla H\right]\quad\mbox{with }\lambda = \frac{\sigma_0+1}{2(\sigma_0-1)}.
\eeq
The operator $\lambda I-\Kcal_{\p D}^*$ is invertible on $L^2_0(\p D)$ (or $H_0^{-1/2}(\p D)$) for $|\lambda|\geq 1/2$ \cite{Escauriaza:1992:RTW, Kellogg:1929:FPT, Verchota:1984:LPR} (see also \cite{Escauriaza:1992:RTW, Escauriaza:1993:RPS} for the stability results). We refer the reader to \cite{Helsing:2013:SIE, Helsing:2017:CSN} for the numerical computation with high precision and to \cite{Ammari:2007:PMT} and references therein for more properties of the NP operators and their applications.

The operator $\Kcal^*_{\p D}$ is symmetric in $L^2(\p D)$ only for a disk or a ball \cite{Lim:2001:SBI}. 
However, the NP operators can be symmetrized using Plemelj's symmetrization principle \cite{Khavinson:2007:PVP}:
\be
\Scal_{\p D}\Kcal_{\p D}^*=\Kcal_{\p D}\Scal_{\p D}.
\ee
We denote by $H^{-1/2}_0(\p D)$ the subspace of functions $u$ contained in $H^{-1/2}(\p D)$ such that $$\la u, 1\ra_{-\frac{1}{2},\frac{1}{2}}=0,$$ where $\la\cdot,\cdot\ra_{-\frac{1}{2},\frac{1}{2}}$ is the duality pairing between the Sobolev spaces $H^{-1/2}(\p D)$ and $H^{1/2}(\p D)$.
The operator $\Kcal_{\p D}^*$ is self-adjoint in $H^{1/2}_0(\p D)$ equipped with a new inner product that involves the single-layer potential \cite{Ando:2016:APR, Kang:2015:LPA, Khavinson:2007:PVP}.

\subsection{Exterior conformal mapping and associated orthogonal coordinates}\label{subsec:coord}
From the Riemann mapping theorem, there uniquely exist a real number $\gamma>0$ and a complex function $\Psi(w)$ that conformally maps the region $\{w\in\CC:|w|> \gamma\}$ onto $\mathbb{C}\setminus\overline{D}$ and satisfies $\Psi(\infty)=\infty$ and $\Psi'(\infty)=1$. We set $\rho_0=\ln \gamma$. The function $\Psi(w)$ admits the following Laurent series expansion:
\beq\label{eqn:extmapping} 
\Psi(w)=w+{a}_0+\frac{{a}_1}{w}+\frac{{a}_2}{w^2}+\cdots=w+\sum_{k=0}^{\infty}a_kw^{-k}
\eeq 
for some complex coefficients $a_n$. We call $\gamma$ the conformal radius of $D$. 
From the well-known Bieberbach conjecture, it holds that 
\beq \label{ineq:a1}
|a_1|<\gamma^2
\eeq
assuming that the area of $D$ is positive. 
From the Caratheodory extension theorem \cite{Caratheodory:1913:GBR}, $\Psi(\rho,\theta)$ extends to the boundary of $D$ as a homeomorphism. The conformal mapping $\Psi$ defines an orthogonal curvilinear coordinate system $(\rho,\theta)\in[\gamma,\infty)\times[0,2\pi)$ for each $z$ in $\CC\setminus D$ via the relation
\be
z=\Psi(\rho,\theta):=\Psi(e^{\rho+\mathrm{i}\theta}).
\ee
The scale factors with respect to $\rho$ and $\theta$ coincide with each other. We denote them by
\be
h(\rho, \theta) := \left|\frac{\partial \Psi} {\partial \rho}\right| = \left|\frac{\partial \Psi} {\partial \theta}\right|.
\ee	
The length element on $\p D$ is given by $d\sigma(z)=h(\rho_0,\theta)d\theta$, and for a function $v(z)=(v\circ\Psi)(\rho,\theta)$ defined in the exterior of $D$, it holds that
\be
\frac{\partial v}{\partial \nu}\Big|_{\p D}^{+}(z)=\frac{1}{h(\rho_0,\theta)}\frac{\partial }{\partial \rho}v\left(\Psi(\rho,\theta)\right)\Big|_{\rho\rightarrow\rho_0^+}.
\ee

If we further assume that $D$ is a $C^{1,\alpha}$ domain for some $0<\alpha<1$, then, by the Kellogg--Warschawski theorem \cite{Pommerenke:1992:BBC}, $\Psi'$ can be continuously extended to the boundary.

As a univalent function, $\Psi$ defines the so-called Faber polynomials $\{F_m(z)\}_{m=1}^\infty$, which were first introduced by G. Faber \cite{Faber:1903:PE} and have been extensively studied in various areas. They are defined by the relation
\beq\label{def:Farber}
\frac{w\Psi'(w)}{\Psi(w)-z}=\sum_{m=0}^\infty \frac{F_m(z)}{w^{m}},\quad z\in{\overline{D}},\ |w|>\gamma.
\eeq
This provides explicit expressions for $F_m$ in terms of $a_n$.
 For example, 
$F_0(z)=1,\ F_1(z)=z-a_0,\ F_2(z)=z^2-2a_0 z+a_0^2-2a_1.$
The Faber polynomials form a basis for complex analytic functions in $D$ \cite{Duren:1983:UF}.
An essential property of the Faber polynomials is that $F_m(\Psi(w))$ is the addition of $w^m$ and negative order terms. In other words,
 \begin{equation} \label{eqn:Faberdefinition}
F_m(\Psi(w))=w^m+\sum_{n=1}^{\infty}c_{mn}{w^{-n}},\quad |w|>\gamma,
\end{equation}
where the coefficients $c_{mn}$ are called the Grunsky coefficients. It holds the Grunsky identity: $$n c_{mn}=m c_{nm}$$ for all $m,n\in\NN$. 
 One can obtain the Grunsky coefficients from the exterior conformal mapping by the recursive formula:
\beq\label{grunskyformula}
c_{m(n+1)} = c_{(m+1)n} - a_{m+n} + \sum_{s=1}^{m-1} a_{m-s}c_{sn} - \sum_{s=1}^{n-1} a_{n-s}c_{ms}, \quad  m,n\ge 1
\eeq
with initial values $c_{1n} = a_n$ and $c_{n1} = na_n$, $n\ge1$.

The complex logarithm admits the following expansion \cite{Duren:1983:UF, Faber:1903:PE, Jung:2021:SEL}: for ${z}=\Psi(w)\in\CC\setminus\overline{D}$ and $\tilde{z}\in D$,
\beq\label{log:Faber}
\log({z}-\tilde{z})=\log w-\sum_{m=1}^\infty \frac{1}{m}F_m(\tilde{z})w^{-m}\eeq
with a proper branch cut.
The expansion \eqnref{log:Faber} sheds new light to understand the solution of the transmission problem \eqnref{cond_eqn0} and the NP operator \cite{Jung:2020:DEE, Jung:2021:SEL}.

The Grunsky coefficients satisfy the so-called {\it strong Grunsky inequalities} \cite{Duren:1983:UF, Grunsky:1939:KSA}: let $N$ be a positive integer and $\lambda_1,\lambda_2,\dots,\lambda_N$ be complex numbers that are not all zero, then we have
\beq\label{inequal:strong}
\sum_{n=1}^\infty n\left|\sum_{k=1}^N\frac{c_{kn}}{\gamma^{k+n}}\lambda_k \right|^2\leq\sum_{n=1}^N n\left|\lambda_n \right|^2,
\eeq
where the equality holds if and only if $D$ is of measure zero.
We also have the so-called {\it weak Grunsky inequality}:
\beq\label{inequal:weak}
\left|\sum_{s=1}^N\sum_{k=1}^N s\frac{c_{ks}}{\gamma^{k+s}}\lambda_k \lambda_s\right|\leq\sum_{s=1}^N s\left|\lambda_s \right|^2.
\eeq
For fixed $m$, plugging $\lambda_k=\frac{1}{\sqrt{m}}\delta_{mk}$ into \eqnref{inequal:strong}, we have 
\be
\sum_{n=1}^\infty \left|\sqrt{\frac{n}{m}}\frac{c_{mn}}{\gamma^{m+n}}\right|^2\leq 1.
\ee
In particular, $\left|\frac{c_{mm}}{\gamma^{2m}}\right|\leq 1$. 
For fixed $m$ and $n$ ($m\neq n$), letting $\lambda_k=\frac{1}{m}\delta_{km}+\frac{1}{n}\delta_{kn}$, we have from \eqnref{inequal:weak} that
$$\left|\frac{1}{m}\frac{c_{mm}}{\gamma^{2m}}+\frac{1}{m}\frac{c_{mn}}{\gamma^{m+n}}+\frac{1}{n}\frac{c_{nm}}{\gamma^{n+m}}+\frac{1}{n}\frac{c_{nn}}{\gamma^{2n}}\right|\leq\frac{1}{m}+\frac{1}{n}\leq 2,$$
and thus,
\beq \notag
\left|c_{mn}\right|\leq 2 m \gamma^{m+n}.
\eeq
It then follows from \eqnref{eqn:Faberdefinition} that for $|w|>\gamma$,
\beq\label{Faber:bound}
\left|F_m(\Psi(w))\right|
\leq |w^m|+\sum_{n=1}^{\infty}\left|c_{mn}{w^{-n}}\right|
\leq |w|^m+2m\gamma^m\frac{\gamma}{|w|-\gamma}. 
\eeq

Let $v$ be a complex analytic function in $D_R:=D\cup\{\Psi(w):\gamma\leq|w|<R\}$ for some $R>\gamma$. Fix any $r\in(\gamma,R)$. Then, \eqnref{def:Farber} holds also for $z\in\overline{D_r}$ and $|w|>r$. By applying the Cauchy integral formula to $v$ and by applying \eqnref{def:Farber}, it follows that
\beq\label{exp_anal_function_Faber}
v(z)=\sum_{m=0}^{\infty}b_m F_m(z)\quad\mbox{in }\overline{D_r}
\eeq
with
\be
b_m = \frac{1}{2\pi i}\int_{|w|=R'}\frac{v(\Psi(w))}{w^{m+1}}\, dw\quad\mbox{for any } R'\in(r,R).
\ee
Here, $b_m$ is independent of choice of $R'$ and $|b_m|\leq M (R')^{-m}$ for some constant $M$. From \eqnref{Faber:bound} and the maximum principle for complex analytic functions, \eqnref{exp_anal_function_Faber} uniformly and absolutely converges for $z\in\overline{D_r}$. Furthermore, \eqnref{eqn:Faberdefinition} and \eqnref{Faber:bound} imply that 
\beq\label{expan:v:Faber}
v(z)=\sum_{m=0}^{\infty}b_m\left(w^m+\sum_{n=1}^{\infty}c_{mn}{w^{-n}}\right)
\eeq
converges uniformly and absolutely for $z\in\left\{\Psi(w):r_1\leq|w|\leq r_2\right\}$ for any $\gamma<r_1<r_2<R$.
In particular, we can change the order of summation in \eqnref{expan:v:Faber}.

\subsection{Series expansions of layer potential operators using Faber polynomials}\label{sec:gMPE}
In this subsection, we review the series expansions of the single-layer potential and the NP operator that were developed in \cite{Jung:2021:SEL} using the exterior conformal mapping and the Faber polynomials associated with the inclusion.

We set the density basis functions: for $z=\Psi(\rho,\theta)\in\p D$,
\begin{align*}
&\eta_0(z)=1,\quad \zeta_0(z)=\frac{1}{h(\rho_0,\theta)},\\
&\eta_{m}(z)=|m|^{-\frac 1 2}e^{i m \theta},\quad
\zeta_{m}(z)=|m|^{\frac 1 2}\frac{e^{ i m \theta}}{h(\rho_0,\theta)}\quad\mbox{for }m\in\ZZ\setminus\{0\}.
\end{align*}
If $D$ has a $C^{1,\alpha}$ boundary, then $\zeta_m$ (resp. $\eta_m$) form a basis of $H^{-1/2}(\p D)$ (resp. $H^{1/2}(\p D)$) \cite{Jung:2021:SEL}. Furthermore, $\zeta_m$ and $\eta_m$ jointly form a bi-orthogonal system for the pair of spaces $H^{-1/2}(\p D)$ and $H^{1/2}(\p D)$. 
In particular, it holds that for any $f\in H^{1/2}(\p D)$ and $g\in H^{-1/2}(\p D)$,
\begin{align}\label{eta_exp:f}
f=&\sum_{m\in\ZZ} f_m\eta_m\quad\mbox{with}\quad f_{m}=\frac{1}{2\pi}\int_{\p D}f\, \overline{\zeta_m}\, d\sigma,\\ \notag
g=&\sum_{m\in\ZZ} g_m\zeta_m\quad\mbox{with}\quad g_{m}=\frac{1}{2\pi}\int_{\p D} g\, \overline{\eta_m}\, d\sigma.
\end{align}

\begin{theorem}[\cite{Jung:2021:SEL}] \label{theorem:seriesexpan}
Let $D$ be a bounded and simply connected domain in $\RR^2$ with $C^{1,\alpha}$ boundary for some $\alpha>0$.  For $z=\Psi(w)\in\CC\setminus\overline{D}$ with $w=e^{\rho+i\theta}$, the single-layer potential satisfies 
\beq\notag
\Scal_{\p\Om}[\zeta_0](z)=
\begin{cases}
\ln \gamma \quad &\mbox{in }\overline{D},\\
\ln|w|\quad&\mbox{in }\CC\setminus\overline{D},
\end{cases}
\eeq 
and, for $m\in\NN$,
\beq\label{Scal:expan}
\Scal_{\p D}[\zeta_m](z)=
\begin{cases}
\ds-\frac{1}{2\sqrt{m}\gamma^m}F_m(z)\quad&\text{in }\overline{D},\\
\ds-\frac{1}{2\sqrt{m}\gamma^m}\left(\sum_{n=1}^{\infty}c_{mn}e^{-n(\rho+i\theta)}+\gamma^{2m}e^{m(-\rho+i\theta)}\right)\quad &\text{in } \CC\setminus\overline{D}.
\end{cases}
\eeq
The series converges uniformly for all $(\rho,\theta)$ such that $\rho\geq\rho_1$ for any fixed $\rho_1>\rho_0$. For the density functions with negative index, it holds that $\Scal_{\p D}[\zeta_{-m}](z)=\overline{\Scal_{\p D}[\zeta_{m}](z)}.$

Furthermore, the NP operators satisfy
$$\Kcal^*_{\p D}\left[\zeta_0\right]=\frac{1}{2}\zeta_0,\quad \Kcal_{\p D}\left[1\right]=\frac{1}{2}$$
and
\begin{align}\label{NP_series1}
&\Kcal^*_{\p D}\left[{\zeta_m}\right]=\frac{1}{2}\sum_{n=1}^{\infty}\frac{\sqrt{n}}{\sqrt{m}}\frac{c_{mn}}{\gamma^{m+n}}\, {\zeta}_{-n},\quad
\Kcal^*_{\p D}\left[\zeta_{-m}\right]=\frac{1}{2}\sum_{n=1}^{\infty}\frac{\sqrt{n}}{\sqrt{m}}\frac{\overline{c_{mn}}}{\gamma^{m+n}}\, \zeta_{n},\\
\label{NP_series2}
&\Kcal_{\p D}\left[\eta_m\right]=\frac{1}{2}\sum_{n=1}^{\infty}\frac{\sqrt{n}}{\sqrt{m}}\frac{c_{mn}}{\gamma^{m+n}}\, {\eta_{-n}},
\quad
\Kcal_{\p D} \left[\eta_{-m}\right]=\frac{1}{2}\sum_{n=1}^{\infty}\frac{\sqrt{n}}{\sqrt{m}}\frac{\overline{c_{mn}}}{\gamma^{m+n}}\, \eta_{n}.
\end{align}
The infinite series in \eqnref{NP_series1} converge in $H^{-1/2}(\p D)$, and those in \eqnref{NP_series2} converge in $H^{1/2}(\p D)$. 
\end{theorem}

We have from \eqnref{eta_exp:f} that
$$\Scal_{\p D}[\zeta_m]=\sum_{n\in\ZZ} a_{mn}\eta_n$$
with
\begin{align*}
a_{mn}&=\frac{1}{2\pi}\int_{\p D}\Scal_{\p D}[\zeta_m]\, \overline{\zeta_n}\, d\sigma\\
&=\frac{1}{2\pi}\lim_{t\rightarrow 0^+}\int_{\p D}\Scal_{\p D}[\zeta_m](\rho_0+t,\theta)\, \overline{\zeta_n}\, d\sigma,
\end{align*}
where the second equality follows from the continuity of the single-layer potential.
From \eqnref{Scal:expan} and \eqnref{NP_series2}, we then have the following relation in $H^{1/2}(\p D)$ sense:
\begin{align}\label{Scal:zeta_m}
\Scal_{\p D}\left[\zeta_m\right]&=-\left(\frac{1}{2}I+\Kcal_{\p D}\right)\left[\eta_m\right]\quad\mbox{on }\p D.
\end{align}

\section{Classical and geometric multipole expansions}\label{sec:cMPE}

\subsection{Classical multipole expansion and CGPTs}
For a multi-index $\alpha=(\alpha_1,\alpha_2)\in \NN\times\NN$, we set $x^{\alpha}=x_1^{\alpha_1}x_2^{\alpha_2}$ and $|\alpha|=\alpha_1+\alpha_2$.
Applying the Taylor series method, the integral formula \eqnref{umh1} leads to the multipole expansion \cite{Ammari:2007:PMT}:
 \beq\label{CP2}
 u(x) = H(x)+\sum_{|\Ga|,|\Gb|=1}^\infty\frac{(-1)^{|\Gb|}}{\Ga!\Gb!}\p^\Gb\Gamma(x)M_{\Ga\Gb}(D,\sigma_0)\p^\Ga H(0), \quad |x|\gg1.
 \eeq
with
\beq
\label{gpt}
M_{\alpha\beta}(D,\sigma_0) = \int_{\p D} y^\beta \left(\lambda I - \Kcal^*_{\p\GO}\right)^{-1}\left[\pd{x^\alpha}{\nu}\right](y) \, d\Gs (y).
\end{equation}
The terms $M_{\Ga\Gb}(D,\sigma_0)$ are the so-called generalized polarization tensors (GPTs) corresponding to the inclusion $D$ with the conductivity $\sigma_0$.

Now, we identify $x=(x_1,x_2)$ in $\RR^2$ with $z=x_1+ix_2$ in $\CC$ and define the GPTs in complex form:
\begin{definition}[\cite{Ammari:2013:MSM}]
Let $\lambda=\frac{\sigma_0+1}{2(\sigma_0-1)}$, and, for each $n\in\NN$, $P_n(z)=z^n$. For $m,n\in\NN$, we define
\begin{align*}
\NN_{mn}^{(1)}(D,\sigma_0)&=\int_{\p D} P_n(z) \left(\lambda I-\Kcal^*_{\p D}\right)^{-1}\left[\pd{ P_m }{\nu} \right](z) \,d\sigma(z),\\
\NN_{mn}^{(2)}(D,\sigma_0)&=\int_{\p D} P_n(z) \left(\lambda I-\Kcal^*_{\p D}\right)^{-1}\left[\pd{\overline{P_m}}{\nu}\right](z) \,d\sigma(z).
\end{align*}
We call $\NN_{mn}^{(1)}$ and $\NN_{mn}^{(2)}$ the complex generalized polarization tensors (CGPTs) corresponding to the inclusion $D$ with the conductivity $\sigma_0$.
\end{definition}
The CGPTs are complex-valued linear combinations of the GPTs, where the expansion coefficients are determined by the Taylor series coefficients of $z^n$. We refer the reader to \cite{Ammari:2013:MSM, Ammari:2007:PMT} for more properties of the CGPTs.

From the expansion of the complex logarithm
$$\log(z-\tilde{z})=\log z-\sum_{n=1}^\infty \frac{1}{n}\, {\tilde{z}^n}{z^{-n}}\quad\mbox{for }|z|>|\tilde{z}|,$$
by taking the real part of the expansion, the fundamental solution to the Laplacian satisfies
\begin{align}\label{log:exp:disk}
\frac{1}{2\pi}\ln |z-\tilde{z}|
&=\frac{1}{2\pi}\ln|z|-\sum_{n=1}^\infty \frac{1}{4\pi n}\left({\tilde{z}^n}{z^{-n}}
+ \overline{{\tilde{z}^n}}\, \overline{z^{-n}}\right)\quad\mbox{for }|z|>|\tilde{z}|.
\end{align}
A real-valued entire harmonic function $H(x)$ admits the expansion 
\beq\label{eqn:Hexpan:complex}
H(x) =\alpha_0+ \sum_{m=1}^\infty \left(\alpha_m z^m+\overline{\alpha_m}\, \overline{ z^m}\right)
\eeq
with some complex coefficients $\alpha_m$. 
Using \eqnref{log:exp:disk}, we can expand \eqnref{umh1} into complex functions: 
\begin{theorem}[\cite{Ammari:2013:MSM}]\label{class:multipole}
For an entire harmonic function $H$ given by \eqnref{eqn:Hexpan:complex}, the solution $u$ to \eqnref{cond_eqn0} satisfies that, for $|z|>\sup\left\{|y|:y\in D\right\}$,
\begin{align*}
u(z)=H(z)-\sum_{n=1}^\infty \sum_{m=1}^\infty\frac{1}{4\pi n}\Bigg[ \left( \alpha_m \NN_{mn}^{(1)}+\overline{\alpha_m}\, \NN_{mn}^{(2)}
\right){z^{-n}}
+ \left(\overline{\alpha_m }\, \overline{\NN_{mn}^{(1)}} + \alpha_m \overline{\NN_{mn}^{(2)}}\right)\overline{z^{-n}}\, \Bigg].
\end{align*}
\end{theorem}

\subsection{Geometric multipole expansion and FPTs}
If $D$ is a disk centered at the origin, the associated Faber polynomials are $z^n$. Hence, \eqnref{log:exp:disk} is in fact an expansion into the Faber polynomials (and its complex conjugates) corresponding to the disk. 
For an inclusion $D$ of general shape, the complex logarithm admits the expansion \eqnref{log:Faber}. Using \eqnref{log:Faber}, we can generalize \eqnref{log:exp:disk}:
 for $z=\Psi(w)\in\CC\setminus\overline{D}$ and $\tilde{z}\in\overline{D}$, 
\begin{align}\label{log_expan_faber}
\frac{1}{2\pi}\ln|z-\tilde{z}|
&=\frac{1}{2\pi}\ln|w|-\sum_{n=1}^\infty \frac{1}{4\pi n}\left(F_n(\tilde{z})w^{-n}
+ \overline{F_n(\tilde{z})}\,\overline{ w^{-n}}\right).
\end{align}
Indeed, \eqnref{log_expan_faber} converges uniformly with respect to $|w|>\gamma$ and uniformly with respect to $\tilde{z}$ belonging to any fixed compact $F$ in the domain $\overline{D}$ \cite{Faber:1903:PE, Suetin:1998:SFP}.

Also, for an entire real harmonic function $H$, we have
\beq\label{eqn:H:faber}
H(x) =\alpha_0+ \sum_{m=1}^\infty \left(\alpha_m F_m(z)+\overline{\alpha_m}\,\overline{ F_m(z)}\right)
\eeq
for some complex coefficients $\alpha_n$. Moreover, \eqnref{eqn:H:faber} converges uniformly on any given compact domain \cite{Johnston:1987:FER}.

As one of the main contribution of this paper, we now generalize the concept of CGPTs and the classical multipole expansion \eqnref{class:multipole} by using the Faber polynomials as follows. 
 \begin{definition}\label{def:FPT}
Let $\lambda=\frac{\sigma_0+1}{2(\sigma_0-1)}$ and $F_n$ be the Faber polynomials of $D$.  For $m,n\in \NN$, we define
\begin{align*}
\FF_{mn}^{(1)}(D,\sigma_0)=\int_{\p D}F_n(z)\left(\lambda I-\Kcal^*_{\p D}\right)^{-1}\left[\pd{F_m}{\nu}\right](z)\,d\sigma(z),\\
\FF_{mn}^{(2)}(D,\sigma_0)=\int_{\p D}F_n(z)\left(\lambda I-\Kcal^*_{\p D}\right)^{-1}\left[\pd{\overline{F_m}}{\nu}\right](z)\,d\sigma(z).
\end{align*}
We call $\FF_{mn}^{(1)}$ and $\FF_{mn}^{(2)}$ the Faber polynomial polarization tensors (FPTs) corresponding to the domain $D$ with the conductivity $\sigma_0$.
\end{definition}

Let us find an expansion of the single layer potential in \eqnref{umh1}. Set $z=\Psi(w)\in\CC\setminus \overline{D}$ with $|w|>\gamma$.
It follows from \eqnref{log_expan_faber} that
\begin{align}\notag
\Scal_{\p D}[\varphi](z)
&=\frac{1}{2\pi} \int_{\p D}\ln|z-\tilde{z}| \varphi(\tilde{z})\, d\sigma(\tilde{z})\\
&=-  \int_{\p D} \sum_{n=1}^\infty  \frac{1}{4\pi n} \left( F_n(\tilde{z})w^{-n} + \overline{F_n(\tilde{z})}\,\overline{ w^{-n}}\right) \varphi(\tilde{z})\, d\sigma(\tilde{z}),\label{S:Faber:proof:thm}
\end{align}
where $\varphi$ is obtained from \eqnref{umh2} and \eqnref{eqn:H:faber} that
\beq\label{phi_derivativeH}
\varphi=(\lambda I-\Kcal_{\p D}^*)^{-1}\left[\sum_{m=1}^\infty \left(\alpha_m \frac{\partial F_m}{\partial \nu}+\overline{\alpha_m}\,\frac{\partial \overline{ F_m}}{\partial \nu}\right) \right].
\eeq
Both the infinite series in \eqnref{S:Faber:proof:thm} and \eqnref{phi_derivativeH} are uniformly convergent for $\tilde{z}\in\p D$ (with $z$ fixed). Since $(\lambda I-\Kcal_{\p D}^*)^{-1}$ is a bounded operator, we then can exchange the order of integral and summation in \eqnref{S:Faber:proof:thm} and get the desired expansion:

\begin{theorem}[Geometric multipole expansion]\label{gme}
For an entire harmonic function $H$ given by \eqnref{eqn:Hexpan:complex}, the solution $u$ to \eqnref{cond_eqn0} satisfies that, for $z=\Psi(w)\in\CC\setminus \overline{D}$ with $|w|>\gamma$,
\begin{align*}
u(z)=H(z)-\sum_{m=1}^\infty \sum_{n=1}^\infty\frac{1}{4\pi n}\Bigg[ \left( \alpha_m \FF_{mn}^{(1)}+ \overline{\alpha_m}\, \FF_{mn}^{(2)}
\right){w^{-n}}
+ \left(\overline{\alpha_m}\,\overline{ \FF_{mn}^{(1)}} + \alpha_m  \overline{\FF_{mn}^{(2)}} \right)\overline{w^{-n}}\, \Bigg].
\end{align*}
\end{theorem}
We emphasize that the geometric multipole expansion in Theorem \ref{gme} holds in the whole exterior region of $D$, differently from the classical multipole expansion in Theorem \ref{class:multipole}.

\section{Explicit matrix expression for the FPTs} \label{FPTGME}

 \subsection{Grunsky matrix $C$ and its symmetrization $G$}
 
We denote by $C$ the Grunsky matrix 
\beq\label{def:C:mat}
C=\big(c_{mn}\big)_{m,n=1}^\infty.
\eeq
We then denote by $G$ the symmetrization of the Grunsky matrix, i.e., 
\be
G=\big(g_{mn}\big)_{m,n=1}^\infty\quad\mbox{with }  g_{mn}=\sqrt{\frac{n}{m}} \frac{c_{mn}}{\gamma^{m+n}}.
\ee
 From the Grunsky identity, $g_{mn}$ satisfy the symmetry relation: $g_{mn}=g_{nm}$ for all positive integers $n$ and $m$.

Let $l^2(\CC)$ denote the vector space of the complex sequence $(v_m)$ satisfying 
$\sum_{m=1}^\infty |v_m|^2<\infty$. 
We interpret the matrix $G$ as a linear operator from $l^2(\CC)$ to $l^2(\CC)$ defined by
\begin{align*}
(v_m) &\longmapsto (w_m)\quad\mbox{with}\quad w_m = \sum_{k=1}^\infty g_{mk} v_k.
\end{align*}
Indeed, it holds from \eqnref{inequal:strong} that
\begin{equation}\label{ineq:Grunsky}
\sum_{k=1}^\infty \left| \sum_{m=1}^\infty g_{mk} v_m \right|^2 \leq \sum_{m=1}^\infty |v_m|^2.
\end{equation}
The inequality \eqnref{ineq:Grunsky} and the symmetricity of $g_{mk}$ imply
\begin{align*}
\left\lVert {G} \right\rVert^2
& = \sup_{\left\lVert (v_k) \right\rVert=1} \sum_{m=1}^\infty \left| \sum_{k=1}^\infty g_{mk} v_k \right|^2 = \sup_{\left\lVert (v_m) \right\rVert=1} \sum_{k=1}^\infty \left| \sum_{m=1}^\infty g_{mk} v_m \right|^2\leq 1.
\end{align*} 
According to \cite[Theorems 9.12-13]{Pommerenke:1975:UF}, it holds for some constant $\kappa\in[0,1)$ that
\begin{equation}\label{Gnorm}
\left\lVert {G} \right\rVert_{l^2\rightarrow l^2} \le \kappa <1
\end{equation}
since $\p \Om$ is quasiconformal; we refer the reader to \cite{Ahlfors:1963:QR, Beckermann:2018:BOP, Kuehnau:1971:VKG, Springer:1964:FEQ} for more properties of quasiconformality.

We can express $G$ in terms of $C$ as
\beq\label{G:inC}
G = \NN^{-\frac{1}{2}} \gamma^{-\NN} C \gamma^{-\NN} \NN^{\frac{1}{2}},
\eeq
where $\gamma^{\pm k\NN}$ and $\NN^{\pm\frac{1}{2}}$ denote the semi-infinite diagonal matrices whose $(n,n)$-entries are $\gamma^{\pm kn}$ and $n^{\pm\frac{1}{2}}$, respectively.

\subsection{FPTs in terms of the Grunsky matrix}

\begin{theorem}\label{F1F2ab}
 Let $D$ be a  bounded and simply connected domain in $\RR^2$ with $C^{1,\alpha}$ boundary for some $\alpha>0$, and $\lambda=\frac{\sigma_0+1}{2(\sigma_0-1)}$. The FPTs satisfy
\begin{align*}
\FF_{mn}^{(1)}(D,\sigma_0)&=4\pi n c_{mn}+ 4\pi n \left(\frac{1}{4}-\lambda^2\right) \left[ C\left( \lambda^2 I - \frac{\gamma^{-2\mathbb{N}} \overline{C}\gamma^{-2\mathbb{N}} C}{4} \right)^{-1}  \right]_{mn}, \\
\FF_{mn}^{(2)}(D,\sigma_0)&=8\pi n \lambda \gamma^{2m}\, \delta_{mn}+ 8\pi n \lambda \gamma^{2m} \left(\frac{1}{4}-\lambda^2\right) \left[ \left( \lambda^2 I - \frac{\gamma^{-2\mathbb{N}} \overline{C} \gamma^{-2\mathbb{N}} C}{4} \right)^{-1}  \right]_{mn},
\end{align*}
where $\delta_{mn}$ is the Kronecker delta function.
\end{theorem}
\begin{proof}
From \eqnref{eqn:Kstarjump} and \eqnref{Scal:expan}, we have
$$
\pd{F_m}{\nu} =-2 \sqrt{m}\gamma^m \left(-\frac{1}{2}I+\Kcal^*_{\p D}\right)\left[\zeta_m\right]\quad\mbox{on }\p D.$$
Applying \eqnref{Scal:zeta_m}, the FPTs becomes
\begin{align*}
\FF_{mn}^{(1)}
&=4\sqrt{mn}\, \gamma^{m+n}\int_{\p D}\Big(\frac{1}{2}I+\Kcal_{\p D}\Big)[\eta_n]\left[\big(\lambda I-\Kcal^*_{\p D}\big)^{-1}\Big(\frac{1}{2}I-\Kcal^*_{\p D}\Big)[\zeta_m]\right]d\sigma\\
&=4\sqrt{mn}\, \gamma^{m+n}\int_{\p D}\eta_n \left[\Big(\frac{1}{2}I+\Kcal^*_{\p D}\Big)\big(\lambda I-\Kcal^*_{\p D}\big)^{-1}\Big(\frac{1}{2}I-\Kcal^*_{\p D}\Big)[\zeta_m]\right]d\sigma\\
&=4\sqrt{mn}\, \gamma^{m+n}\int_{\p D}\eta_n \left[ \lambda I+\Kcal^*_{\p D}+\Big(\frac{1}{4}-\lambda^2\Big)\left(\lambda I-\Kcal^*_{\p D}\right)^{-1}\right][\zeta_m]\, d\sigma
\end{align*}
and
$$\FF_{mn}^{(2)}=4\sqrt{mn}\, \gamma^{m+n}\int_{\p D}\eta_n \left[ \lambda I+\Kcal^*_{\p D}+\Big(\frac{1}{4}-\lambda^2\Big)\left(\lambda I-\Kcal^*_{\p D}\right)^{-1}\right][\zeta_{-m}]\, d\sigma.$$
From the fact that  $d\sigma(z)=h(\rho_0,\theta)d\theta$, one can easily find that 
\beq\label{etazetarelation}
\int_{\p D}\eta_n\zeta_m d\sigma =0,\ 
\int_{\p D}\eta_n{\zeta_{-m}} d\sigma = 2\pi\, \delta_{mn}\quad\mbox{for all }m,n\in \NN.
\eeq
Then, by using \eqnref{NP_series1} and \eqnref{etazetarelation}, we have
\begin{align}
\FF_{mn}^{(1)}(D,\sigma_0)&
=4\pi n c_{mn}+\left(\frac{1}{4}-\lambda^2\right) A_{m,n}, \label{homogeneousF1} \\
\FF_{mn}^{(2)}(D,\sigma_0)&
=8\pi n \lambda \gamma^{2m}\, \delta_{mn}+ \left(\frac{1}{4}-\lambda^2\right) A_{-m,n} \label{homogeneousF2}
\end{align}
with 
\beq\label{Apmmn}
A_{\pm m, n}:=4\sqrt{mn}\,\gamma^{m+n} \int_{\p D}\eta_n  \left(\lambda I-\Kcal^*_{\p D}\right)^{-1}\left[\zeta_{\pm m}\right]\, d\sigma.
\eeq
In the remaining of the proof, we derive explicit expression for $A_{\pm m, n}$.

Since $\zeta_m$ form a basis of $H^{-1/2}(\p D)$, we can expand $\left(\lambda I-\Kcal^*_{\p D}\right)^{-1}\left[\zeta_{\pm m}\right]$ as 
\begin{align}
\left(\lambda I-\Kcal^*_{\p D}\right)^{-1}\left[\zeta_{m}\right] &= \sum_{n=1}^\infty \left( x_{mn} \zeta_n + y_{mn} \zeta_{-n} \right), \label{liki} \\
\left(\lambda I-\Kcal^*_{\p D}\right)^{-1}\left[\zeta_{-m}\right]&= \sum_{n=1}^\infty \left( \, \overline{x_{mn}} \zeta_{-n} + \overline{y_{mn}} \zeta_{n}\right ).\label{likiconj}
\end{align} 
for some expansion coefficients $x_{mn}$ and $y_{mn}$.
Applying $(\lambda I-\Kcal^*_{\p D})$ on both sides of \eqnref{liki}, from \eqnref{NP_series1}, we obtain
\begin{align}
\zeta_{m} 
&=  \sum_{n=1}^\infty  x_{mn} \left(\lambda \zeta_n -\frac{1}{2}\sum_{k=1}^{\infty}\, g_{nk} {\zeta}_{-k} \right) + \sum_{n=1}^\infty y_{mn} \left(\lambda  \zeta_{-n} -\frac{1}{2}\sum_{k=1}^{\infty}\, \overline{g_{nk}}\, \zeta_{k}\right), \label{zetamseries}
\end{align}
where $g_{kn}$ is the symmetrized Grunsky coefficient given by \eqnref{ineq:Grunsky}. Rearranging \eqnref{zetamseries}, we have 
\begin{align*}
&\sum_{n=1}^\infty \left(\lambda x_{mn} -\delta_{mn}\right) \zeta_n + \sum_{n=1}^\infty \lambda y_{mn} \zeta_{-n} 
=\sum_{n=1}^\infty \sum_{k=1}^{\infty} \frac{y_{mk}}{2}\, \overline{g_{kn}} \, \zeta_{n} +
 \sum_{n=1}^\infty  \sum_{k=1}^{\infty} \frac{x_{mk}}{2}\, g_{kn}\, {\zeta}_{-n}.
\end{align*}
It implies that
\begin{align}
\lambda\, x_{mn}&= \delta_{mn}+\frac{1}{2} \sum_{k=1}^{\infty} y_{mk}\, \overline{g_{kn}}, \label{x}\\
\lambda\, y_{mn} & =  \frac{1}{2} \sum_{k=1}^{\infty} x_{mk}\, g_{kn},\label{y}
\end{align}
By combining \eqnref{x} and \eqnref{y}, we have the matrix expressions for $X=(x_{mn})_{m,n=1}^\infty$ and $Y=(y_{mn})_{m,n=1}^\infty$ as follows:
\begin{align}
X & = \lambda \left( \lambda^2 I - \frac{G \overline{G}}{4} \right)^{-1}, \label{xmatrix}\\
Y & = \frac{1}{2} G \left( \lambda^2 I - \frac{\overline{G} G}{4} \right)^{-1}. \label{ymatrix}
\end{align}
The inverse matrix in \eqnref{ymatrix} is a geometric series, which converges from the following (see \eqnref{Gnorm}):
\begin{align}
\left\lVert \left( \lambda^2I - \frac{\overline{{G}}{G}}{4}  \right)^{-1} \right\rVert
 &= \lambda^{-2} \left\lVert \sum_{n=0}^\infty \left(\frac{\overline{{G}}{G}}{4\lambda^2}\right)^n\right\rVert \notag\\
 &\leq \lambda^{-2} \sum_{n=0}^\infty \left(\frac{\left\lVert{G}\right\lVert^2}{4\lambda^2} \right)^n = \lambda^{-2} \left(1-\frac{\|{G}\|^2}{4\lambda^2} \right)^{-1} < \infty. \label{invertibleGG}
\end{align}
Similarly, the inverse matrix in \eqnref{xmatrix} also converges. 

By applying \eqnref{etazetarelation}, \eqnref{Apmmn}, \eqnref{liki}, and \eqnref{ymatrix}, we obtain
\begin{align}
A_{m, n} 
&= 4\sqrt{mn}\,\gamma^{m+n} \int_{\p D}\eta_n  \left(\lambda I-\Kcal^*_{\p D}\right)^{-1}\left[\zeta_{m}\right]\, d\sigma \notag\\
&= 8\pi \sqrt{mn}\,\gamma^{m+n} y_{mn} \notag\\
&= 4\pi n \left[ C\left( \lambda^2 I - \frac{\gamma^{-2\mathbb{N}} \overline{C}\gamma^{-2\mathbb{N}} C}{4} \right)^{-1}  \right]_{mn}. \label{4pincinv}
\end{align}
We also use \eqnref{G:inC}. 
Similarly, from \eqnref{etazetarelation}, \eqnref{Apmmn}, \eqnref{likiconj}, and \eqnref{xmatrix}, it holds that
\begin{align}
A_{-m, n}
&= 4\sqrt{mn}\,\gamma^{m+n} \int_{\p D}\eta_n  \left(\lambda I-\Kcal^*_{\p D}\right)^{-1}\left[\zeta_{-m}\right]\, d\sigma \notag\\
&= 8\pi \sqrt{mn}\,\gamma^{m+n} \, \overline{x_{mn}} \notag\\
&= 8\pi n \lambda \gamma^{2m} \left[ \left( \lambda^2 I - \frac{\gamma^{-2\mathbb{N}} \overline{C} \gamma^{-2\mathbb{N}} C}{4} \right)^{-1}  \right]_{mn}. \label{8pinlg2minv}
\end{align}
From \eqnref{homogeneousF1}, \eqnref{homogeneousF2}, \eqnref{4pincinv}, and \eqnref{8pinlg2minv}, we prove the proposition.
\end{proof}
For the insulating or perfect conducting case (i.e., $\sigma_0=\infty$ or $0$), we have $\lambda=\pm 1/2$. 
It is then straightforward to derive the following lemma from Theorem \ref{F1F2ab}.
\begin{cor}\label{cor:extreme}
Let $D$ be a  bounded, simply connected, and $C^{1,\alpha}$ domain in $\RR^2$ with the conductivity $\sigma_0=\infty$ or $0$. Then, the corresponding FPTs are
\begin{align*}
\FF_{mn}^{(1)}(D,\sigma_0)&=4\pi n c_{mn},\\
\FF_{mn}^{(2)}(D,\sigma_0)&=\pm 4\pi n \gamma^{2m}\, \delta_{mn},
\end{align*}
where $+$ and $-$ corresponds to $\sigma_0=\infty$ and $\sigma_0=0$, respectively. 
\end{cor}

\subsection{Polarization tensor of an inclusion with extreme conductivity}

Plugging $m=n=1$ into Corollary \ref{cor:extreme} with $\sigma_0=\infty$ or $0$,  we arrive to the relation
\begin{align}
\FF_{11}^{(1)}(D,\sigma_0)&=4\pi c_{11}=4\pi a_1,\label{F11_1:extreme}\\
\FF_{11}^{(2)}(D,\sigma_0)&= \pm 4\pi \gamma^2.\label{F11_2:extreme}
\end{align}
Since $F_1(z)=z-a_0$, the first-order FPTs coincide with those of CGPTs. Hence, it holds that 
\begin{align*}
\FF_{11}^{(1)}&=\NN_{11}^{(1)}=m_{11}-m_{22}+i(m_{12}+m_{21}),\\
\FF_{11}^{(2)}&=\NN_{11}^{(2)}=m_{11}+m_{22},
\end{align*}
where the $2\times2$ symmetric matrix
$$
M=\left[\begin{array}{cc}
m_{11} & m_{12}\\
m_{21} & m_{22}
\end{array}\right]
$$
denotes the polarization tensor (PT) associated with $D$. In other words, 
$m_{11}=M_{(1,0)(1,0)},\ m_{12}=M_{(1,0)(0,1)},\ m_{21}=M_{(0,1)(1,0)}$, and $m_{22}=M_{(0,1)(0,1)}$,
following the definition \eqnref{gpt}. We then obtain the following lemma from \eqnref{F11_1:extreme} and \eqnref{F11_2:extreme}.

\begin{lemma}\label{lemma:PT}
Let $D$ be a  bounded and simply connected $C^{1,\alpha}$ domain in $\RR^2$ with the conductivity $\sigma_0=\infty$ or $0$. Then, the corresponding PT satisfies
$$M
=2\pi\,
\Bigg[
\begin{array}{cc}
\ds \pm \gamma^2 +\Re\{a_1\}&  \Im\{a_1\}    \\
\ds \Im\{a_1\} &  \pm \gamma^2 -\Re\{a_1\}
\end{array}
\Bigg].
$$
\end{lemma}

It is worth remaking that Lemma \ref{lemma:PT} was extended to a domain with Lipschitz boundary in \cite{Cherkaev:2021:GSE}.

\begin{cor}\label{cor:M:inv}
Under the same assumption as in Lemma \ref{lemma:PT}, we have 
\begin{align*}
\operatorname{Tr}(M^{-1})
&=
\pm \left(\pi \gamma ^2-  \frac{\pi |a_1|^2}{\gamma^2}\right)^{-1}.
\end{align*}
\end{cor}
\begin{proof}
 Let $\lambda_\alpha$ and $\lambda_\beta$ be the eigenvalues of the PT.
It then holds from Lemma \ref{lemma:PT} that
\begin{align*}
&\lambda_\alpha+\lambda_\beta=\mbox{Tr}(M)=\pm 4\pi \gamma^2,\\
&\lambda_\alpha\lambda_\beta=\mbox{det}(M)=4\pi^2\left(\gamma^4-|a_1|^2\right).
\end{align*}
From \eqnref{ineq:a1}, we have $\gamma^4-|a_1|^2\neq 0$.
Thus, we have
$$\operatorname{Tr}(M^{-1})=\frac{1}{\lambda_\alpha}+\frac{1}{\lambda_\beta}
=\pm\frac{1}{\pi}\frac{\gamma^2}{\gamma^4-|a_1|^2}.
$$
\end{proof}
The P\'{o}lya--Szeg\"{o} conjecture asserts that $\left|{\operatorname{Tr}}(M^{-1})\right|$ for an inclusion $D$ with unit area has a minimum value if and only if $D$ is a disk or an ellipse; this conjecture was proved for general conductivity case in \cite{Polya:1951:IIM}.
Corollary \ref{cor:M:inv} leads a simple alternative proof for the insulating or perfecting conducting case in two dimensions.
Indeed, the area of the domain $D$ given by the exterior conformal mapping \eqnref{eqn:extmapping} with the conformal radius $\gamma>0$ is $$0<|D|=\pi \gamma^2 -\pi\sum_{k=1}^\infty \frac{k|a_k|^2}{\gamma^{2k}}.$$
It is then straightforward to see that $|a_1|<\gamma^2$ and 
\begin{align}\label{trace_inverse}
|D|\left|{\operatorname{Tr}}(M^{-1})\right|
&=
\left(\pi \gamma^2 -\pi\sum_{k=1}^\infty \frac{k|a_k|^2}{\gamma^{2k}}\right)
\frac{1}{\pi \gamma^2-  \frac{\pi |a_1|^2}{\gamma^2}}\leq 1.
\end{align}
The equality holds in \eqnref{trace_inverse} if and only if $a_k=0$ for all $k\geq 2$, equivalently, $D$ is a disk or ellipse.

\subsection{An ellipse case}
For the case when $\Psi(w)=w+a_0+\frac{a_1}{w}$, one can easily derive from \eqnref{grunskyformula} that
\beq\label{ellipsegrunsky}
c_{mn}=\delta_{mn}\, a_1^n\quad\mbox{for all }m,n\in\NN,
\eeq
and thus,
\beq\notag
\Kcal^*_{\p D}\left[\zeta_m\right]=\frac{a_1^m}{2\gamma^{2m}}\,{\zeta_{-m}},\quad
\Kcal^*_{\p D}\left[\zeta_{-m}\right]=\frac{\overline{a_1^m}}{2\gamma^{2m}}\,\zeta_{m}.
\eeq
Hence, the space spanned by $\{\zeta_{-m},\zeta_{m}\}$ is invariant under the operator $\lambda I -\Kcal^*_{\p D}$, where
$\lambda I -\Kcal^*_{\p D}$ corresponds to the $2\times 2$ matrix
$$
\begin{bmatrix}
\lambda & \ -\frac{a_1^m}{2\gamma^{2m}}\\[2mm]
-\frac{\overline{a_1^m}}{2\gamma^{2m}}	& \lambda
\end{bmatrix}.
$$
This matrix is invertible, by \eqnref{ineq:a1}, for $|\lambda|\geq \frac{1}{2}.$
Theorem \ref{F1F2ab} leads to the following results.
\begin{lemma} For an ellipse given by $\Psi(w)=w+a_0+\frac{a_1}{w}$ with some $\gamma>0$, it holds that
$$\FF_{mn}^{(1)}(D,\sigma_0)=\FF_{mn}^{(2)}(D,\sigma_0)=0\quad\mbox{for }m\neq n,$$ 
and, for each $m\in\NN$,
\begin{align*}
\FF_{mm}^{(1)}(D,\sigma_0)&
= 4\pi m\, a_1^m\left[1+ \left(\frac{1}{4}-\lambda^2\right)
\left(\lambda^2-\frac{|a_1|^{2m}}{4\gamma^{4m}}\right)^{-1}\right], \\
\FF_{mm}^{(2)}(D,\sigma_0)&
= 8\pi  m\gamma^{2m}\lambda\left[1+ \left(\frac{1}{4}-\lambda^2\right)
\left(\lambda^2-\frac{|a_1|^{2m}}{4\gamma^{4m}}\right)^{-1}\right].
\end{align*}
\end{lemma}

\section{Multi-coated inclusion case}\label{sec:FPT_reducing}

In this section, we extend the concept of geometric multipole expansion to multi-coated inclusions. 
We now set $\Om$ to be a multi-coated inclusion that consists of the core $D$, also denoted by $\Om_0$, and the coating layers $\Om_j$ for $j=1,\dots,L$.

\begin{figure}[H]
	\includegraphics[trim=0 50 0 30,clip,scale=0.38]{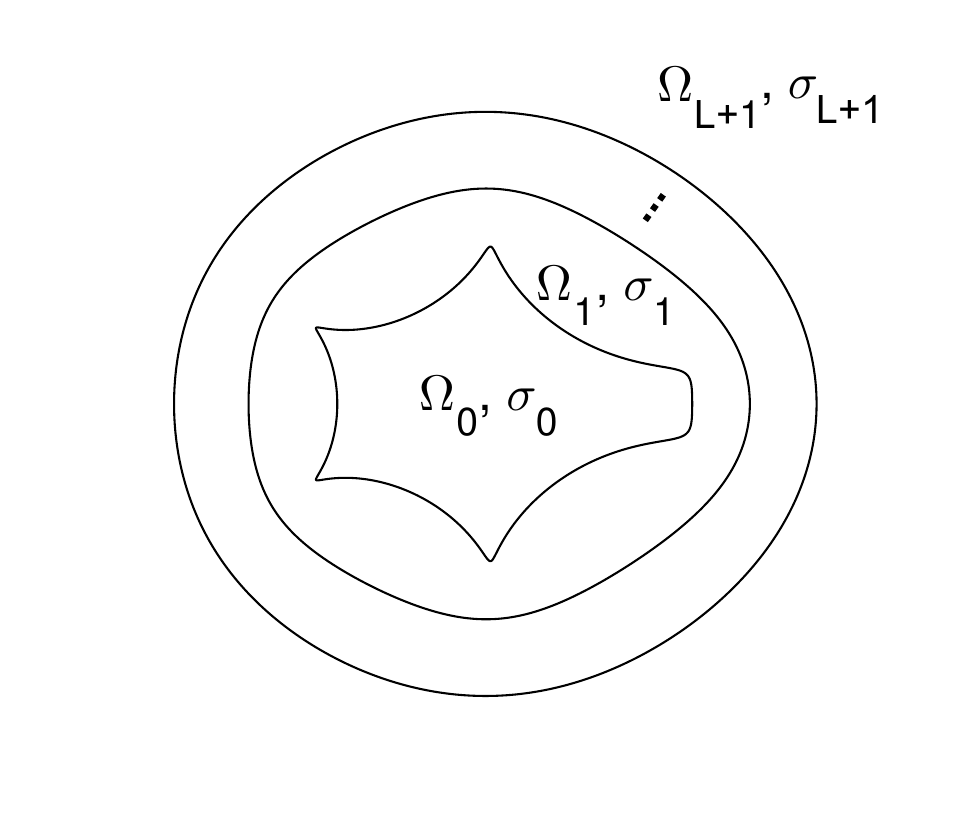}
	\caption{Multi-coated inclusion. Each coating layer is an image of concentric annuli via the exterior conformal mapping associated with the core $\Om_0$.}\label{fig:multicoated}
\end{figure}

We define $\gamma$, $\Psi(w)$ and $(\rho,\theta)$ associated with the core as in Subsection \ref{subsec:coord}. 
We assume that $\Psi$ can be conformally extended to $\{w\in\CC:|w|>\gamma-\delta\}$ for some $\delta>0$. 
We further assume that the coating layers are images of concentric annuli via the mapping $\Psi$. In other words, we have
\begin{align}
\label{def:Omj}
&\Om_j=\big\{\Psi(\rho,\theta)\,:\, r_{j-1}< e^\rho < r_j\big\},\quad j=1,2,\dots,L,\\
&\Om_{L+1}=\big\{\Psi(\rho,\theta)\,:\,  e^\rho > r_L\big\},\label{def:Om_Np1}
\end{align}
where $r_j$ are some constants satisfying $$\gamma=r_0<r_1<\cdots<r_L<\infty,$$ and $\Om_{L+1}$ is the exterior of $\Om$; we refer the reader to Figure \ref{fig:multicoated} for the geometry of the considered inclusion. 
The conductivities in $\Om_j$ are assumed to be positive constants, namely $\sigma_j$, for $j=0,1,\dots,L+1$ ($\sigma_{L+1}=1$). 
For notational simplicity, we set
\be
\boldsymbol{\sigma}=(\sigma_0,\dots,\sigma_{L}).
\ee

\subsection{Geometric multipole expansion and FPTs}
We consider the conductivity interface problem, for a given entire harmonic function $H$,
\beq\label{cond_eqn:multi}
\begin{cases}
	\ds\nabla\cdot\sigma\nabla u=0\quad&\mbox{in }\RR^2, \\
	\ds u(x) - H(x)  =O({|x|^{-1}})\quad&\mbox{as } |x| \to \infty
\end{cases}
\end{equation}
with the conductivity distribution
$$\sigma = \sigma_0\,\chi(\Om_0)+\chi(\Om_{L+1})+\sum_{j=1}^L \sigma_j \chi(\Om_j).$$
The solution $u$ satisfies the boundary condition, for each $j=0,1,\dots,L$,
\beq\label{BC:multi}
u\big|^+=u\big|^-,\quad \sigma_{j+1}\pd{u}{\nu}\Big|^+=\sigma_j\pd{u}{\nu}\Big|^-\quad\mbox{on }\p\Om_j.
\eeq

Note that $(u-H)\circ\Psi(w)$ is harmonic in $\{w:|w|>{r_L}\}$ and decays to zero as $|w|\rightarrow\infty$. 
As \eqnref{cond_eqn:multi} is linear with respect to $H(x)$, for $H$ given by $$H(x) =\alpha_0+ \sum_{m=1}^\infty \left(\alpha_m F_m(z)+\overline{\alpha_m}\,\overline{ F_m(z)}\right),$$ the solution $u$ to \eqnref{cond_eqn:multi} admits the geometric multipole expansion: for $z=\Psi(w)\in \CC\setminus\overline{\Om}$, 
\begin{align}\label{eqn:FPT_expan:multi}
u(z)=H(z)-\sum_{n=1}^\infty \sum_{m=1}^\infty\frac{1}{4\pi n}\Bigg[ \left( \alpha_m \FF_{mn}^{(1)}+ \overline{\alpha_m}\, \FF_{mn}^{(2)}\right){w^{-n}}
+ \left(\overline{\alpha_m}\,\overline{ \FF_{mn}^{(1)}} + \alpha_m\,  \overline{\FF_{mn}^{(2)}} \right)\overline{w^{-n}}\, \Bigg]
\end{align}
with some complex coefficients $\FF_{mn}^{(1)}=\FF_{mn}^{(1)}(\Om,\boldsymbol{\sigma})$ and $\FF_{mn}^{(2)}=\FF_{mn}^{(2)}(\Om,\boldsymbol{\sigma})$.
\begin{definition}
We call $\FF_{mn}^{(1)}$ and $\FF_{mn}^{(2)}$ the FPTs corresponding to the multi-coated inclusion $\Om$ with the conductivity $\boldsymbol{\sigma}$. 
We denote the FPTs in matrices:
\be
\FF^{(1)}(\Om,\boldsymbol{\sigma}) = \left(\FF^{(1)}_{mn}\right)_{m,n=1}^\infty, \quad \FF^{(2)}(\Om,\boldsymbol{\sigma}) = \left(\FF^{(2)}_{mn}\right)_{m,n=1}^\infty.
\ee
\end{definition}

\subsection{Series solutions for the transmission problem}

Fix $m\in\NN$.
To find explicit formulas of FPTs associated with a multi-coated inclusion, we express the solution, namely $u_m$, to \eqnref{cond_eqn:multi} with $H$ replaced by
$$H_m(x) = \alpha_m F_m(z) + \overline{\alpha_m}\,\overline{F_m(z)}.$$

We set $z=x_1+ix_2=\Psi(w)$ for $x\in\RR^2\setminus\overline{\Om_0}$. Since $u_m$ is harmonic in each $\Om_j$, $j=0,\dots,L+1$, and $(u_m-H_m)(x)$ decays to zero as $|x|\rightarrow\infty$, 
we can express $u_m$ as
\beq\label{multiexpan}
u_m(x) =
\begin{cases}
\ds\sum_{n=1}^\infty \left( b_{mn} F_n(z) + \overline{b_{mn}}\,\overline{F_n(z)} \right) \quad &\mbox{in } {\Om_0},\\
\ds\sum_{n=1}^\infty \left(\, \beta_{mn}^{j,1}\, w^n  +\overline{\beta_{mn}^{j,2}}\, w^{-n}+ \overline{\beta_{mn}^{j,1}}\,\overline{w^n} + {\beta_{mn}^{j,2}}\,\overline{w^{-n}}\,\right) \quad &\mbox{in }\Om_j,\quad j=1,\dots,L,\\
\ds  \alpha_m F_m(z) + \overline{\alpha_m}\,\overline{F_m(z)}+\sum_{n=1}^\infty \left(\, {s_{mn}} w^{-n} + \overline{s_{mn}}\,\overline{w^{-n}}\,\right)\quad&\mbox{in }\Om_{L+1}
\end{cases}
\eeq
with some complex coefficients $b_{mn}$, $s_{mn}$, $\beta_{mn}^{j,1}$, $\beta_{mn}^{j,2}$.
Since $\Psi$ can be conformally extended to $\{w\in\CC:|w|>\gamma-\delta\}$ for some $\delta>0$, we can expand $u_m$ into $w^{\pm n}$ for $|w|\in(\gamma-\delta,\gamma)$; see the discussion at the end of Subsection \ref{subsec:coord}. For $z\in \Om_{L+1}$ (i.e., $|w|>\rho_L$), we just apply \eqnref{eqn:Faberdefinition}. It then follows that
\begin{align*}
u_m(x)&=
\ds\sum_{n=1}^\infty \left(\, \beta_{mn}^{0,1}\, w^n  + \overline{\beta_{mn}^{0,2}}\, w^{-n}  + \overline{\beta_{mn}^{0,1}}\,\overline{w^n}+ {\beta_{mn}^{0,2}}\,\overline{w^{-n}}\,\right)\quad \mbox{for }|w|\in(\gamma-\delta,\gamma),\\
u_m(x)&=\ds\sum_{n=1}^\infty \left(\,\beta_{mn}^{L+1,1}\, w^n +\overline{\beta_{mn}^{L+1,2}}\, w^{-n}
+\overline{\beta_{mn}^{L+1,1}}\,\overline{w^n}+{ \beta_{mn}^{L+1,2}}\,\overline{w^{-n}}\,\right)\quad\mbox{in }\Om_{L+1}
\end{align*}
with
\beq \label{beta_k:coeff}
\begin{aligned}
\beta_{mn}^{0,1}=b_{mn},\quad &{\beta_{mn}^{0,2}}=\sum_{k=1}^\infty \overline{b_{mk}}\,\overline{c_{kn}},\quad \beta_{mn}^{L+1,1} = \alpha_m  \delta_{mn},\quad &\beta_{mn}^{L+1,2}=\overline{\alpha_m}\,\overline{c_{mn}}+\overline{s_{mn}}.
\end{aligned}
\eeq

By applying \eqnref{multiexpan} and \eqnref{beta_k:coeff} to the interface condition in \eqnref{BC:multi}, we obtain that, for each $j=0,1,\dots,L$, 
\beq\label{inter:eqn:eachlayer}
\begin{aligned}
\ds\beta_{mn}^{j+1,1}\, r_j^n + {\beta_{mn}^{j+1,2}}\,r_j^{-n}
&=\ds\beta_{mn}^{j,1}\, r_j^n + {\beta_{mn}^{j,2}}\,r_j^{-n},\\[1mm]
\ds\sigma_{j+1} \left(\beta_{mn}^{j+1,1}\, r_j^n -{\beta_{mn}^{j+1,2}}\, r_j^{-n}\right)
&=\ds\sigma_j\left(\beta_{mn}^{j,1}\, r_j^n -{\beta_{mn}^{j,2}}\,r_j^{-n}\right).
\end{aligned}
\eeq
We can rewrite \eqnref{inter:eqn:eachlayer} as
\beq\label{def:d_k1}
\left[
 \begin{array}{c}
\ds \beta_{mn}^{j,1} \\
\ds {\beta_{mn}^{j,2}} 
  \end{array}
  \right]
=T_{n}^j
\begin{bmatrix}
\ds \beta_{mn}^{j+1,1} \\
\ds {\beta_{mn}^{j+1,2}}
\end{bmatrix}
\quad\mbox{with }\quad
T_{n}^j=\frac{1}{2}\left(1-\frac{\sigma_{j+1}}{\sigma_j}\right)
\begin{bmatrix}
\ds 2\lambda_j & \ds r_j^{-2n}\\[2mm]
\ds r_j^{2n} & \ds 2\lambda_j
\end{bmatrix}
\eeq
with $$
\lambda_j = \frac{\sigma_{j} + \sigma_{j+1}}{2(\sigma_{j} - \sigma_{j+1})}.
$$
It then directly follows that 
\beq\label{beta:matrix:eqn}
\begin{bmatrix}
\ds \beta_{mn}^{0,1} \\[2mm]
\ds {\beta_{mn}^{0,2}} 
\end{bmatrix}
=\begin{bmatrix}
d_n^{(1)}&  d_n^{(2)}\\[2mm]
 d_n^{(3)} &  d_n^{(4)}
\end{bmatrix}
\begin{bmatrix}
\ds \beta_{mn}^{L+1,1} \\[2mm]
\ds {\beta_{mn}^{L+1,2}}
\end{bmatrix}
\quad\mbox{with }
\begin{bmatrix}
d_n^{(1)}&  d_n^{(2)}\\[2mm]
 d_n^{(3)} &  d_n^{(4)}
\end{bmatrix}
:=T_{n}^0 T_{n}^1\cdots T_{n}^L.
\eeq
Note that $d_n^{(1)},\dots,d_n^{(4)}$ are real-valued constants that are independent of $\alpha_m$ and $m$.

Since \eqnref{cond_eqn:multi} is linear with respect to the background field $H$, we can express $s_{mn}$ in \eqnref{multiexpan} as, fixing $m$ and $n$, 
$$s_{mn}=\alpha_m \, t_1 +\overline{\alpha_m}\, t_2$$
for some complex values $t_1$ and $t_2$ independent of $\alpha_m$. 
In view of the expansion \eqnref{eqn:FPT_expan:multi}, we then have
\beq\label{eqn:d_k}
s_{mn} = - \alpha_m \frac{\FF_{mn}^{(1)}}{4\pi n} - \overline{\alpha_m}\, \frac{\FF_{mn}^{(2)}}{4\pi n}.
\eeq
From \eqnref{beta_k:coeff} and \eqnref{beta:matrix:eqn}, we obtain
\begin{align}\label{b_k:eqn1}
 \alpha_m\, \delta_{mn} \, d_n^{(1)} + \left(\overline{\alpha_m}\,\overline{c_{mn}}+ \overline{s_{mn}} \right) d_n^{(2)}&= b_{mn},\\\label{b_k:eqn2}
\alpha_m\, \delta_{mn}\, d_n^{(3)} + \left(\overline{\alpha_m}\,\overline{c_{mn}}+ \overline{s_{mn}} \right) d_n^{(4)}&= \sum_{k=1}^\infty \overline{b_{mk}}\, \overline{c_{kn}}.
\end{align}
As $d_n^{(1)},\dots,d_n^{(4)}$ are real-valued, it follows by plugging \eqnref{b_k:eqn1} into \eqnref{b_k:eqn2} that 
\begin{align}
\alpha_m\, \delta_{mn}\, d_n^{(3)} + (\overline{\alpha_m}\,\overline{c_{mn}}+ \overline{s_{mn}}) d_n^{(4)}
&= \sum_{k=1}^\infty \overline{b_{mk}} \overline{c_{kn}} \notag\\ 
&= \sum_{k=1}^\infty \left(\overline{\alpha_m}\, \delta_{mk}\,d_k^{(1)} \, \overline{c_{kn}} + \left(\alpha_m \,c_{mk} +  s_{mk}\right) \, d_k^{(2)} \overline{c_{kn}} \right).\label{coeff:sum:eqn}
\end{align}

\subsection{Asymptotic behavior of $d_n^{(2)}$ and $d_n^{(4)}$}

For notational convenience, we denote the multiplication of the scaling constant in \eqnref{def:d_k1} by
\be
\tau_L := \frac{1}{2^{L+1}}\left(1-\frac{\sigma_1}{\sigma_0}\right) \left(1-\frac{\sigma_2}{\sigma_1}\right) \cdots \left(1-\frac{\sigma_{L+1}}{\sigma_L}\right)
 = \frac{1}{\left( 2\lambda_0+1 \right) \left( 2\lambda_1+1 \right) \cdots \left( 2\lambda_L+1 \right)}.
\ee
It follows from determinants of the $2\times2$-matrix in \eqnref{def:d_k1} and \eqnref{beta:matrix:eqn} that
\begin{align}
d_n^{(1)} d_n^{(4)} - d_n^{(2)} d_n^{(3)} = \tau_L^2 \left( 4\lambda_0^2-1 \right) \left( 4\lambda_1^2-1 \right) \cdots \left( 4\lambda_L^2-1 \right) = \frac{\sigma_1 \sigma_2 \cdots \sigma_{L+1}}{\sigma_0 \sigma_1 \cdots \sigma_L} = \frac{\sigma_{L+1}}{\sigma_0}.\label{detD}
\end{align}
For the case $L=1$, we obtain 
\be
\begin{aligned}
d_n^{(1)} &= \tau_1 \left(4\lambda_0\lambda_1 + r_0^{-2n} r_1^{2n}\right) = \tau_1 r_0^{-2n} r_1^{2n} + o\left(r_0^{-2n} r_1^{2n}\right),\\[2mm]
d_n^{(2)} &= \tau_1 \left(2\lambda_0 r_1^{-2n} + 2 \lambda_1 r_0^{-2n} \right) = 2 \tau_1 \lambda_1 r_0^{-2n} + o\left(r_0^{-2n}\right),\\[2mm]
d_n^{(3)} &= \tau_1 \left(2\lambda_0 r_1^{2n} + 2 \lambda_1 r_0^{2n} \right) = 2 \tau_1 \lambda_0 r_1^{2n} + o\left(r_1^{2n}\right),\\[2mm]
d_n^{(4)} &= \tau_1 \left(4\lambda_0\lambda_1 + r_0^{2n} r_1^{-2n} \right) = 4 \tau_1 \lambda_0\lambda_1 + o(1),
\end{aligned}
\ee
where $o(\cdot)$ denotes the standard little-$\rm o$ notation.
If $L=2$, we have 
\begin{align*}
d_n^{(1)} &= \tau_2 \left( 8\lambda_0 \lambda_1 \lambda_2 + 2\lambda_0 r_1^{-2n}r_2^{2n} + 2 \lambda_1 r_0^{-2n} r_2^{2n} + 2 \lambda_2 r_0^{-2n} r_1^{2n} \right) = 2 \tau_2 \lambda_1 r_0^{-2n} r_2^{2n} + o\left(r_0^{-2n} r_2^{2n}\right),\\[2mm]
d_n^{(2)} &= \tau_2 \left( 4\lambda_0\lambda_1 r_2^{-2n} + 4\lambda_0 \lambda_2 r_1^{-2n} + 4 \lambda_1 \lambda_2 r_0^{-2n} + r_0^{-2n} r_1^{2n} r_2^{-2n}\right) = 4 \tau_2 \lambda_1 \lambda_2 r_0^{-2n} + o\left(r_0^{-2n}\right),\\[2mm]
d_n^{(3)} &= \tau_2 \left( 4\lambda_0 \lambda_1 r_2^{2n} + 4 \lambda_0 \lambda_2 r_1^{2n} + 4 \lambda_1 \lambda_2 r_0^{2n} + r_0^{2n} r_1^{-2n}r_2^{2n}\right) = 4 \tau_2 \lambda_0 \lambda_1 r_2^{2n}  + o\left(r_2^{2n}\right),\\[2mm]
d_n^{(4)} &= \tau_2 \left( 8\lambda_0 \lambda_1 \lambda_2 + 2\lambda_0 r_1^{2n} r_2^{-2n} + 2 \lambda_1 r_0^{2n} r_2^{-2n} + 2  \lambda_2 r_0^{2n} r_1^{-2n} \right) = 8 \tau_2 \lambda_0 \lambda_1 \lambda_2 + o(1).
\end{align*}
For general $L$, one can show the following.
\begin{lemma}\label{asymptoticD}
For each fixed $L$, $d_n^{(1)},\dots,d_n^{(4)}$ satisfy the asymptotic behavior as $n\rightarrow\infty$:
\begin{align*}
d_n^{(1)} &= 2^{L-1 } \tau_L \lambda_1 \lambda_2 \cdots \lambda_{L-1} r_0^{-2n} r_L^{2n} + o(r_0^{-2n} r_L^{2n}), \\[2mm]
d_n^{(2)} &= 2^L \tau_L \lambda_1 \lambda_2 \cdots \lambda_L r_0^{-2n} + o(r_0^{-2n}),\\[2mm]
d_n^{(3)} &= 2^L \tau_L \lambda_0 \lambda_1 \cdots \lambda_{L-1} r_L^{2n} + o(r_L^{2n}), \\[2mm]
d_n^{(4)} &= 2^{L+1} \tau_L \lambda_0 \lambda_1 \cdots \lambda_L + o(1).
\end{align*}
\end{lemma}

We denote by $D_j$ the diagonal matrix with the diagonal entries $d_n^{(j)}$, that is,
\beq \label{def:C:D2:only} 
D_j=\left(d_n^{(j)} \delta_{mn}\right)_{m,n=1}^\infty,\quad 1\leq j\leq 4.
\eeq
By Lemma \ref{asymptoticD}, we have $d_n^{(4)}\neq 0$ for sufficiently large $n$. Furthermore, it holds for $n\gg1$  that
\beq\label{eqn:inv:tildeG}
I-D_2 \overline{C} D_4^{-1} D_2 {C}  D_4^{-1} \approx \lambda_0^{-2} \left(\lambda_0^2 I - \frac{r_0^{-2\NN}\overline{ C} r_0^{-2\NN} {C}}{4} \right),
\eeq
where $C$ denotes the Grunsky matrix (see \eqnref{def:C:mat}). 
Note that the right-hand side of \eqnref{eqn:inv:tildeG} is invertible from \eqnref{invertibleGG}. 
 For all examples in Subsection \ref{sec:numer:examples}, the $100\times 100$ dimensional truncated matrices of $D_4$ and $D_4 - D_2 \overline{C} D_4^{-1} D_2 {C}$ are invertible.

From now on, we assume that $d_n^{(4)}\neq 0$ for all $n$ so that $D_4$ is invertible. We also assume that $D_4 - D_2 \overline{C} D_4^{-1} D_2 {C}$ is invertible.

\subsection{Matrix formulation for FPTs}

We set the two semi-infinite matrices:
\be
\boldsymbol{\alpha}=\big(\alpha_n \delta_{mn}\big)_{m,n=1}^\infty,\quad S=\big(s_{mn}\big)_{m,n=1}^\infty.
\ee
It then follows from \eqnref{coeff:sum:eqn} that
\begin{gather}
\label{fmn:1}
\ds \boldsymbol{\alpha} D_3 + (\overline{\boldsymbol{\alpha}}\, \overline{C} + \overline{S}) D_4=\ds\overline{\boldsymbol{\alpha}}\, D_1 \overline{C} + \left(\boldsymbol{\alpha} C+ S \right) D_2 \overline{C},\\
\label{fmn:2}
\ds \boldsymbol{\alpha} C + S = \ds \boldsymbol{\alpha} D_1 C D_4^{-1} + \left(\overline{\boldsymbol{\alpha}} \overline{C}+ \overline{S} \right) D_2 C D_4^{-1} -  \overline{\boldsymbol{\alpha}} D_3 D_4^{-1}.
\end{gather}
Here, \eqnref{fmn:2} is a direct consequence of \eqnref{fmn:1} by taking complex conjugates.
Substituting \eqnref{fmn:2} into \eqnref{fmn:1}, we obtain
\begin{gather}\notag
\left(\overline{\boldsymbol{\alpha}}\, \overline{C} + \overline{S}\right)\left(D_4-D_2 C D_4^{-1} D_2 \overline{C}\right)
= \overline{\boldsymbol{\alpha}}\, D_1 \overline{C} + \boldsymbol{\alpha} D_1 C D_4^{-1} D_2 \overline{C}  -  \overline{\boldsymbol{\alpha}}\, D_3 D_4^{-1} D_2 \overline{C} - \boldsymbol{\alpha} D_3,
\end{gather}
and thus,
\begin{align}\notag
\overline{S} = \left( \overline{\boldsymbol{\alpha}}\, D_1 \overline{C} + \boldsymbol{\alpha} D_1 C D_4^{-1} D_2 \overline{C}  -  \overline{\boldsymbol{\alpha}}\, D_3 D_4^{-1} D_2 \overline{C} - \boldsymbol{\alpha} D_3 \right) \left(D_4-D_2 C D_4^{-1} D_2 \overline{C}\right)^{-1} - \overline{\boldsymbol{\alpha}} \,\overline{C}.
\end{align}

Now, applying \eqnref{eqn:d_k}, we derive
\begin{align*}
&-\left(\overline{\boldsymbol{\alpha}}\, \overline{\FF^{(1)}} + \boldsymbol{\alpha}\, \overline{\FF^{(2)}}\right) (4\pi \NN)^{-1} \\
&= \left( \,\overline{\boldsymbol{\alpha}}\, D_1 \overline{C} + \boldsymbol{\alpha} D_1 C D_4^{-1} D_2 \overline{C}  -  \overline{\boldsymbol{\alpha}} \, D_3 D_4^{-1} D_2 \overline{C} - \boldsymbol{\alpha} D_3 \right) \left(D_4-D_2 C D_4^{-1} D_2 \,\overline{C}\,\right)^{-1} - \overline{\boldsymbol{\alpha}}\, \overline{C},
\end{align*}
or equivalently,
\begin{align}\label{alphaF12}
&\boldsymbol{\alpha} \, \left( \left(D_3-D_1 C D_4^{-1} D_2 \,\overline{C}\,\right) \left(D_4-D_2 C D_4^{-1} D_2 \,\overline{C}\,\right)^{-1}  - \overline{\FF^{(2)}} \left(4\pi \NN\right)^{-1}\, \right) \notag\\ 
=&\, \overline{\boldsymbol{\alpha}} \, \left( \left(D_1  D_4 - D_2 D_3\right) \, D_4^{-1}\, \overline{C} \, \left(D_4-D_2 C D_4^{-1} D_2 \overline{C}\right)^{-1} - \overline{C} +  \overline{\FF^{(1)}} \left(4\pi \NN\right)^{-1} \right).
\end{align}
Since $\boldsymbol{\alpha}$ is diagonal and corresponds to an arbitrary entire function $H$, both sides of \eqnref{alphaF12} vanish. Furthermore, from \eqnref{detD}, the diagonal entry of $ D_1  D_4 - D_2 D_3$ is as follows: for each $m\in\NN$, 
$$
\big[ D_1  D_4 - D_2 D_3 \big]_{mm} = d_m^{(1)}d_m^{(4)} - d_m^{(2)} d_m^{(3)} = \frac{\sigma_{L+1}}{\sigma_0}.
$$
Hence, we have the following theorem.
\begin{theorem}\label{FPTsystem}
We set $C$ and $D_j$ as in \eqnref{def:C:mat} and \eqnref{def:C:D2:only}, respectively. 
The FPTs of the multi-coated structure $\Om$ given as at the beginning of Section \ref{sec:FPT_reducing} have the following matrix expressions, given that $D_4$ and $D_4 - D_2 \overline{C} D_4^{-1} D_2 C$ are invertible: for each $m,n\in\NN$,
\begin{align*}
\FF_{mn}^{(1)}(\Omega,\boldsymbol{\sigma}) &= 4\pi n \, c_{mn} - 4\pi n \, \sigma_0^{-1} \sigma_{L+1} \left[ D_4^{-1} C \, \left(D_4-D_2\, \overline{C} \, D_4^{-1} D_2 \, C \right)^{-1} \right]_{mn},\\
\FF_{mn}^{(2)}(\Omega,\boldsymbol{\sigma}) &= 4\pi n \left[\left(D_3-D_1\, \overline{C}\, D_4^{-1} D_2\, C \right) \left(D_4-D_2 \,\overline{C} \, D_4^{-1} D_2\, C \right)^{-1}\right]_{mn}.
\end{align*}
\end{theorem}
When $\Om_0$ is a disk, the fact that $a_1=0$ and \eqnref{ellipsegrunsky} imply $c_{mn}=0$ for all $m,n\in\NN$. Thus, $C$ is identical to the zero matrix. Hence, we have the following corollary.
\begin{cor}[\cite{Ammari:2013:ENC1}]
If $\Om_0$ is a disk, then the FPTs of the multi-coated inclusion $\Omega$ satisfy 
$$\FF_{mn}^{(1)}(\Omega,\boldsymbol{\sigma})=0,\quad 
\FF_{mn}^{(2)}(\Omega,\boldsymbol{\sigma}) = 4\pi m \, \frac{d_m^{(3)}}{d_m^{(4)}}\delta_{mn}\quad\mbox{for all }m,n\in\NN.
$$
\end{cor}

The Grunsky coefficient of an ellipse satisfy \eqnref{ellipsegrunsky}, which means $C$ and FPTs have diagonal matrix forms.
\begin{cor}\label{cor:ellipse:FPT}
If $\Om_0$ is an ellipse, the FPTs of the multi-coated ellipse $\Omega$ are diagonal matrices:
\begin{align*}
\FF_{mn}^{(1)}(\Omega,\boldsymbol{\sigma}) &=  4\pi m \, a_1^m \delta_{mn}-  \frac{4\pi m \, a_1^m \sigma_0^{-1} \sigma_{L+1} }{ \big(d_m^{(4)}\big)^2 - \big(d_m^{(2)}\big)^2 |a_1|^{2m}}\delta_{mn},\\
\FF_{mn}^{(2)}(\Omega,\boldsymbol{\sigma}) &= 4\pi m \,\frac{d_m^{(3)} d_m^{(4)}-d_m^{(1)} d_m^{(2)} |a_1|^{2m}}{   \big(d_m^{(4)}\big)^2 - \big(d_m^{(2)}\big)^2 |a_1|^{2m}}\delta_{mn}\qquad\mbox{for all }m,n\in\NN.
\end{align*}
In particular, for $L=1$, it holds that, for each $m\in\NN$,
\begin{align*}
\FF_{mm}^{(1)}(\Omega,\boldsymbol{\sigma}) &=\ds 4\pi m \, a_1^m - \frac{4\pi m \, a_1^m (4\lambda_0^2-1)(4\lambda_1^2-1)}{\left(4\lambda_0\lambda_1 + \frac{r_0^{2m}}{r_1^{2m}} \right)^2 - 4 \left(\frac{\lambda_0}{r_1^{2m}} + \frac{\lambda_1}{r_0^{2m}}\right)^2 |a_1|^{2m} },\\[2mm]
\FF_{mm}^{(2)}(\Omega,\boldsymbol{\sigma}) &= \ds 8\pi m \frac{ \left( 4\lambda_0\lambda_1 + \frac{r_0^{2m}}{r_1^{2m}} \right) \left( \lambda_0 r_1^{2m} + \lambda_1  r_0^{2m}\right) - \left(4\lambda_0\lambda_1 + \frac{r_1^{2m}}{r_0^{2m}}  \right) \left( \frac{\lambda_0}{r_1^{2m}} + \frac{\lambda_1}{r_0^{2m}} \right) |a_1|^{2m} }{\left( 4\lambda_0\lambda_1 + \frac{r_0^{2m}}{r_1^{2m}} \right)^2 - 4 \left( \frac{\lambda_0}{r_1^{2m}} + \frac{\lambda_1}{r_0^{2m}} \right)^2 |a_1|^{2m}}.
\end{align*}
\end{cor}

\begin{remark}
For the simply connected inclusion (that is, $L=0$), we get
\[
\begin{bmatrix}
\ds d_n^{(1)} & \ds d_n^{(2)}\\
\ds d_n^{(3)} & \ds d_n^{(4)}
\end{bmatrix}
=
\frac{1}{2\lambda+1}
\begin{bmatrix}
\ds 2\lambda & \ds \gamma^{-2n}\\
\ds \gamma^{2n} & \ds 2\lambda
\end{bmatrix}
\quad 
\mbox{with } \lambda = \frac{\sigma_0+1}{2(\sigma_0-1)},
\]
and hence,
$$D_1 = D_4 = \left(\frac{2\lambda}{2\lambda+1}\right) I, \qquad D_2 = \left(\frac{1}{2\lambda+1}\right) \gamma^{-2\mathbb{N}},  \qquad  D_3 = \left(\frac{1}{2\lambda+1}\right) \gamma^{2\mathbb{N}}.$$
Applying above relations to Theorem \ref{FPTsystem}, we obtain the same results as Theorem \ref{F1F2ab}.
\end{remark}

\subsection{Domains with rotational symmetry}
Let us consider some properties of FPTs for a multi-coated structure with rotational symmetry. 
Let $N$ be a positive integer. A domain is said to have rotational symmetry of order $N$ if it is invariant under the rotation by  by $2\pi/N$ (see Figure \ref{rot_sym}).
\begin{figure}[b!]
\centering
\includegraphics[scale=0.33,trim={0.9cm 0.9cm 0.9cm 0.9cm}, clip]{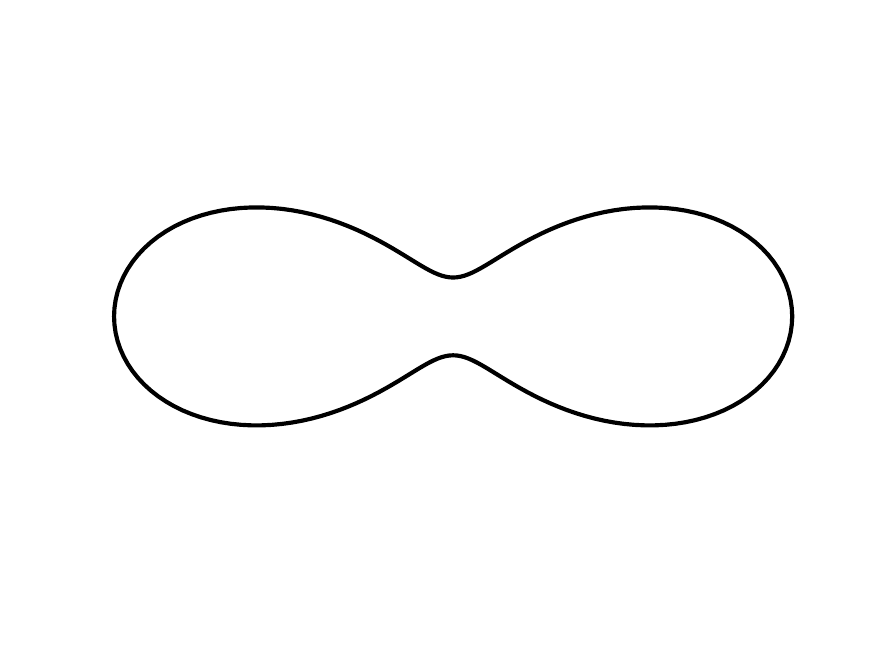}
\includegraphics[scale=0.33,trim={0.9cm 0.9cm 0.9cm 0.9cm}, clip]{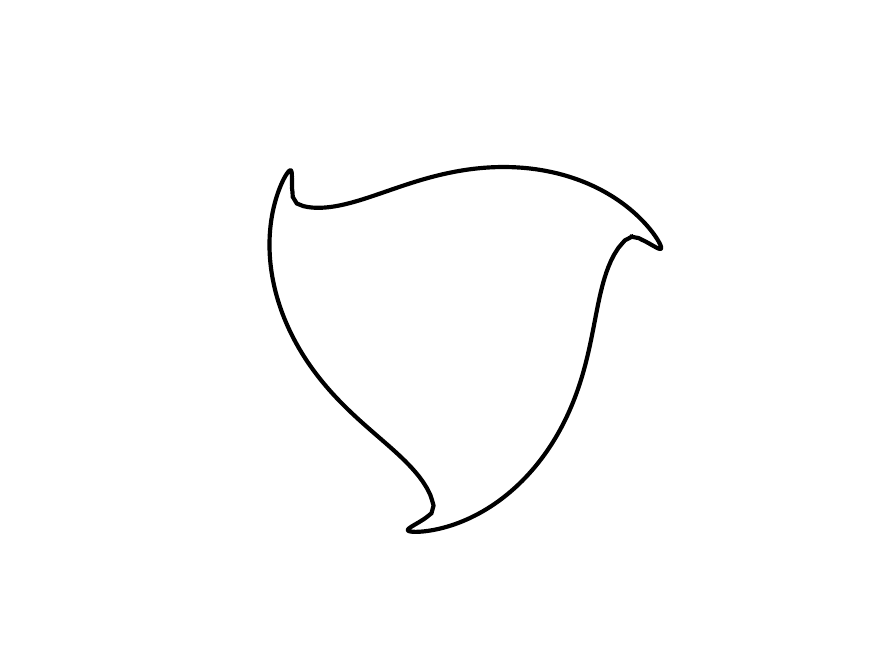}
\includegraphics[scale=0.33,trim={0.9cm 0.9cm 0.9cm 0.9cm}, clip]{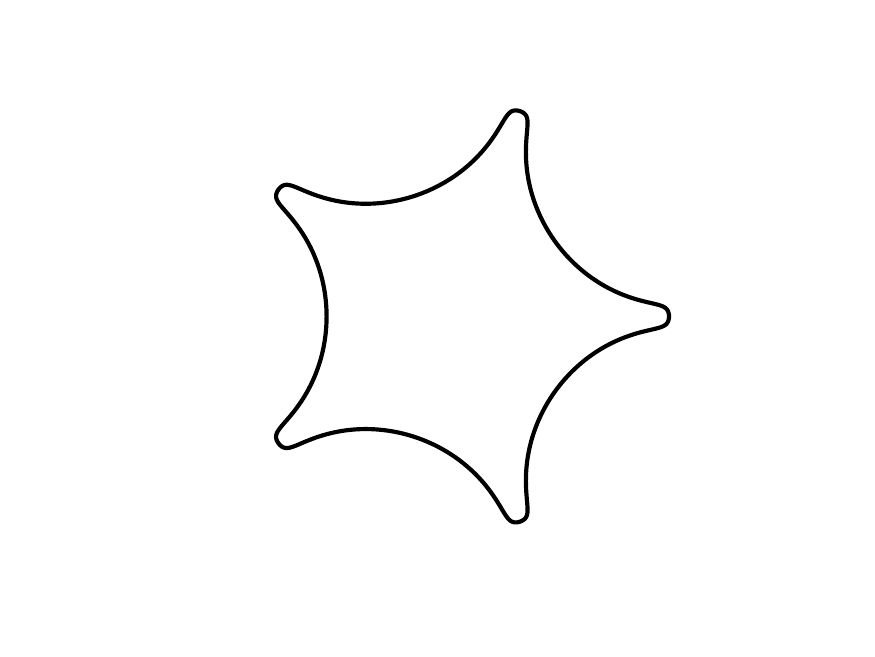}
\caption{Domains with rotational symmetry of order $2$, $3$, and $5$}\label{rot_sym}
\end{figure}
Then, the exterior conformal mapping $\Psi$ corresponding to the domain satisfies
\be
\Psi(w) = e^{\frac{2\pi i}{N}}\Psi\left(e^{-\frac{2\pi i}{N}}w\right) \quad \mbox{on } \lvert w \rvert = \gamma,
\ee
which is equivalent to
\beq\notag
a_n = 0 \quad \mbox{for } n+1\not\equiv 0 \ (\mbox{mod } N).
\eeq
Here, $m\not\equiv 0$ (mod $N$) means that $m=kN+r$ for some $k\in\NN$ and $r=1,\dots,N-1$.
It follows by induction using \eqnref{grunskyformula} that the Grunsky coefficients satisfy
\beq\label{cmn:SNm}
c_{mn} = 0 \quad \mbox{for } m+n \not\equiv 0 \ (\mbox{mod } N),
\eeq

Let us define two collections of semi-infinite matrices. The first is a set of diagonally striped infinite matrices 
\begin{align*}
\mathcal{S}^+_N := \left\{ S = (S_{mn})_{m,n\in\NN} : S_{mn} = 0  \quad \mbox{for } m-n \not\equiv 0 \  (\mbox{mod } N) \right\},
\end{align*}
and 
the second is a set of anti-diagonally striped infinite matrices
\begin{align*}
\mathcal{S}^-_N :=\left \{ S= (S_{mn})_{m,n\in\NN} : S_{mn} = 0  \quad \mbox{for } m+n \not\equiv 0 \ (\mbox{mod } N)\right \}.
\end{align*}
For instance, the following matrices $A$ and $B$ belong to $\mathcal{S}^+_3$ and $\mathcal{S}^-_3$, respectively:
\be
A=
\left[
\begin{array}{*7{C{1.5em}}}
* & \ 0 &\ 0 &\ * &\ 0 &\ 0 & \cdots\\
0 & \ * &\ 0 &\ 0 &\ * &\ 0 & \cdots\\
0 &\  0 &\ * &\ 0 &\ 0 &\ * & \cdots\\
* &\  0 &\ 0 &\ * &\ 0 &\ 0 & \cdots\\
0 &\  * &\ 0 &\ 0 &\ * &\ 0 & \cdots\\
0 &\  0 &\ * &\ 0 &\ 0 &\ * &  \cdots\\
\vdots &\ \vdots &\ \vdots &\ \vdots &\ \vdots &\ \vdots &\ \ddots 
\end{array}
\right],
\quad
B=
\left[
\begin{array}{*7{C{1.5em}}}
0 &\ * &\ 0 &\ 0 &\ * &\ 0 & \cdots\\
* &\ 0 &\ 0 &\ * &\ 0 &\ 0 & \cdots\\
0 &\ 0 &\ * &\ 0 &\ 0 &\ * & \cdots\\
0 &\ * &\ 0 &\ 0 &\ * &\ 0 & \cdots\\
* &\ 0 &\ 0 &\ * &\ 0 &\ 0 & \cdots\\
0 &\ 0 &\ * &\ 0 &\ 0 &\ * & \cdots\\
\vdots &\ \vdots &\ \vdots &\ \vdots &\ \vdots &\ \vdots &\ \ddots 
\end{array}
\right],
\ee
where $*$ represents elements that can be nonzero.
One can easily find that the following product rules hold:
\begin{lemma}\label{prodproperty}
For $U^+,V^+\in\mathcal{S}^+_N$ and $U^-,V^-\in \mathcal{S}^-_N$, we have
\be
\begin{aligned}
&\ds U^+V^+,\, U^-V^- \in \mathcal{S}^+_N,\\
&\ds U^+U^-,\, U^-U^+ \in \mathcal{S}^-_N.
\end{aligned}
\ee
\end{lemma}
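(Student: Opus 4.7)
The plan is to prove each inclusion by a direct index-chasing argument in modular arithmetic. The key observation is that membership in $\mathcal{S}^+_N$ or $\mathcal{S}^-_N$ is determined purely by a linear congruence condition on the row and column indices, which interacts cleanly with summation over a middle index in a matrix product.

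First I would write out $(A^+B^+)_{mk} = \sum_{l\ge 1} A^+_{ml}B^+_{lk}$ and argue by contrapositive: assume $m-k \not\equiv 0 \pmod N$ and show each summand vanishes. For any term to be nonzero we need both $m-l\equiv 0 \pmod N$ and $l-k\equiv 0 \pmod N$, whose sum gives $m-k \equiv 0 \pmod N$, a contradiction. The remaining three statements follow from the same template, just using different sign combinations of the congruence conditions: for $A^-B^-$ the conditions $m+l\equiv 0$ and $l+k\equiv 0 \pmod N$ subtract to give $m-k\equiv 0$; for $A^+A^-$ the conditions $m-l\equiv 0$ and $l+k\equiv 0$ add to give $m+k\equiv 0$; for $A^-A^+$ the conditions $m+l\equiv 0$ and $l-k\equiv 0$ add to give $m+k\equiv 0$. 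In each case only one sign bookkeeping step is required.

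A minor technical issue is that the sum $\sum_{l\ge 1}$ is infinite, so in principle one should justify convergence before rearranging. For the lemma as stated — describing the zero pattern of the product — this is not a real obstacle: whenever the product $A^+B^+$ is defined as a semi-infinite matrix, the $(m,k)$-entry is by definition the sum of the relevant terms, and our argument shows that \emph{every} term in that sum is zero when $m-k\not\equiv 0\pmod N$, so no convergence issue arises. The same is true for the other three products.

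In summary, I expect the whole proof to be a short paragraph that handles the four cases uniformly: write the matrix-product entry, apply the contrapositive, and reduce to an elementary statement about sums and differences of residues modulo $N$. There is no genuine obstacle; the content of the lemma is essentially that $\mathcal{S}^+_N \oplus \mathcal{S}^-_N$ carries a $\mathbb{Z}/2$-grading under matrix multiplication induced by the sign $\pm$, and the proof simply verifies this grading at the level of entries.
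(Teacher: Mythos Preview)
Your proposal is correct and is exactly the natural argument; the paper in fact states Lemma~\ref{prodproperty} without proof, treating it as an elementary product rule, so your index-chasing via congruences modulo $N$ is precisely what is implicitly assumed. Your remark on convergence is a fair caveat but, as you note, irrelevant here since every summand vanishes in the cases at issue.
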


\begin{prop}\label{rotationsym}
Let $\Om$ be a multi-coated inclusion given as at the beginning of this section. 
If the core $\Om_0$ has a rotational symmetry of order $N$, it follows that
\begin{align*}
&\FF_{mn}^{(1)}(\Omega, \boldsymbol{\sigma}) = 0 \quad \mbox{for } m+n \not\equiv 0 \ (\mbox{mod } N),\\
&\FF_{mn}^{(2)}(\Omega, \boldsymbol{\sigma}) = 0 \quad \mbox{for } m-n \not\equiv 0 \ (\mbox{mod } N).
\end{align*}
\end{prop}
\pf
From \eqnref{cmn:SNm}, we have 
\beq\label{csm}
C \in \mathcal{S}^-_N.
\eeq
Since $D_j$ in Theorem \ref{FPTsystem} is a diagonal matrix, we have $D_j, D^{-1}_j \in \mathcal{S}^+_N$. It follows from Lemma \ref{prodproperty} and  \eqnref{csm} that 
\beq\label{splus}
\begin{aligned}
D_3-D_1\, \overline{C}\, D_4^{-1} D_2\, C &\in \mathcal{S}^+_N, \\
D_4-D_2 \,\overline{C} \, D_4^{-1} D_2\, C &\in \mathcal{S}^+_N,
\end{aligned}
\eeq
and
$
D_2 \, C \, D_4^{-1} \in \mathcal{S}^-_N.
$
Hence, we have $$(D_2 \,\overline{C} \, D_4^{-1} D_2\, C \, D_4^{-1})^k  \in \mathcal{S}^+_N.$$
Thus, the geometric series on $\left( I - D_2 \,\overline{C} \, D_4^{-1} D_2\, C \, D_4^{-1} \right)^{-1}$ implies that
\begin{align}
\left( D_4-D_2 \,\overline{C} \, D_4^{-1} D_2\, C \right)^{-1}
&= D_4^{-1} \left( I - D_2 \,\overline{C} \, D_4^{-1} D_2\, C \, D_4^{-1} \right)^{-1} \notag\\
&= D_4^{-1} \sum_{k=0}^{\infty} \left( D_2 \,\overline{C} \, D_4^{-1} D_2\, C \, D_4^{-1} \right)^{k}\in \mathcal{S}^+_N. \label{sminus}
\end{align}
By using \eqnref{splus}, \eqnref{sminus}, and Theorem \ref{FPTsystem}, we finally obtain that
\begin{align*}
\left[\frac{\FF_{mn}^{(1)}(\Omega,\boldsymbol{\sigma})}{4\pi n} \right]_{m,n=1}^\infty &=  C - \sigma_0^{-1} \sigma_{L+1} D_4^{-1} C \, \left(D_4-D_2\, \overline{C} \, D_4^{-1} D_2 \, C \right)^{-1} \in \mathcal{S}^-_N,\\
\left[\frac{\FF_{mn}^{(2)}(\Omega,\boldsymbol{\sigma})}{4\pi n} \right]_{m,n=1}^\infty  &= \left(D_3-D_1\, \overline{C}\, D_4^{-1} D_2\, C \right) \left(D_4-D_2 \,\overline{C} \, D_4^{-1} D_2\, C \right)^{-1} \in \mathcal{S}^+_N,
\end{align*}
which complete the proof. 
\qed


\section{Construction of semi-neutral inclusions}\label{sec:numerical}
In this section, as an application of FPTs, we construct semi-neutral inclusions that are layered structures whose layers, except for the core, are images of concentric annuli via the exterior conformal mapping of the core (see Definition \ref{def:semi-neutral} below). For such a multi-coated inclusion, we consider mainly the material parameters in the layers, differently from the approach in \cite{Feng:2017:CGV}, in which the shapes of coating layers were determined by a shape optimization procedure.

For the concentric disks, $\FF^{(1)}=0$ \cite{Ammari:2013:ENC1}. In view of the expressions in Theorem \ref{FPTsystem} with the Grunsky matrix $C$ associated with the core $\Om_0$, one can deduce that $\FF^{(1)}$ significantly depends on the shape of $\Om_0$. Hence, one cannot find a neutral inclusion of the considered multilayered geometry, except the concentric disks. Instead, we can obtain semi-neutral inclusions with the core of general shape that are not perfectly neutral but show relatively negligible field perturbations for low-order polynomial loadings. 

For the simply connected domain of general shape, $\FF^{(1)}$ is not zero but still significantly smaller than $\FF^{(2)}$, and hence, $\FF^{(2)}$ mainly contributes in the field perturbation. 
For a given core $\Om_0$ of general shape, one can construct layered structures that show relatively small field perturbation. We choose the number of layers and determine appropriate conductivity values in the coating layers such that the multi-coated inclusions satisfy the following condition.
\begin{definition}[Semi-neutral inclusion] \label{def:semi-neutral}
Let $\Om$ be a multi-coated inclusion with the core $\Om_0$ and $L$ layers of coating, as in Section \ref{sec:FPT_reducing}. We say that $\Om$ is a semi-neutral inclusion provided that, for some $N_1, N_2\in\NN$, the second FPTs are negligible for low-order terms, that is, 
\be
\frac{\FF^{(2)}_{mn}(\Omega,\boldsymbol{\sigma})}{4\pi n} \approx 0\quad\mbox{for }\, m\le N_1 \mbox{ and } n\leq N_2.
\ee
\end{definition}

\subsection{Numerical scheme}\label{subsec:FPT_reducing}

For a given $\Om_0$, we denote $\gamma$, $\Psi(w)$, and $(\rho,\theta)$ as in Subsection \ref{subsec:coord}. 
We set $\Om_j$ ($j=1,\dots,L+1$) as in \eqnref{def:Omj} and \eqnref{def:Om_Np1}. 
We assume that $\sigma_{L+1}=1$ and that the conductivity $\sigma_0$ is fixed. 

We find $\boldsymbol{\sigma} = (\sigma_1,\dots,\sigma_L)$ such that the condition Definition \ref{def:semi-neutral} holds.
In other words, we look for $\boldsymbol{\sigma}$ that satisfies the equation
\beq\label{fsig:condition}
\boldsymbol{f}(\boldsymbol{\sigma})\approx 0,
\eeq
where $\boldsymbol{f}:(\RR^+)^{L}\to\RR^{N_1N_2}$ is a nonlinear vector-valued function $\boldsymbol{\sigma}\longmapsto (f_1,...,f_{N_1N_2})$
given by 
$$
f_l =  \left|\frac{\FF^{(2)}_{mn}}{4\pi n}\right|,\quad l=N_2(m-1)+n,
$$
for $1 \le m \le N_1$, $1\le n \le N_2$. To find $\boldsymbol{\sigma}$ satisfying \eqnref{fsig:condition}, we use the multivariate Newton's method:
\beq\label{initialguess}
\boldsymbol{\sigma}^{(k+1)} = \boldsymbol{\sigma}^{(k)} - \alpha\, \boldsymbol{J}^{\dagger}\big[\boldsymbol{\sigma}^{(k)}\big]\, \boldsymbol{f}\big[\boldsymbol{\sigma}^{(k)}\big],
\eeq
where $\alpha$ is a constant in $(0,1)$, and $k$ indicates the iteration step, and  $\boldsymbol{J}^\dagger$ denotes the pseudo--inverse of the Jacobian matrix of $\boldsymbol{f}$. We give the initial guess $\boldsymbol{\sigma}^{(0)}$ as a proper alternating series.

We calculate $\boldsymbol{f}(\boldsymbol{\sigma}^{(k)})$ by Theorem \ref{FPTsystem}, where we truncate the semi-infinite matrices $C, D_2, D_4$ to be $100\times100$ matrices. To obtain $\boldsymbol{J}^{\dagger}[\boldsymbol{\sigma}^{(k)}]$, we use the finite difference approximation of the partial derivatives of $\boldsymbol{f}$. All the computations in this paper are performed by Matlab R2020a software.
 We iterate the multivariate Newton's method until the conductivity distribution satisfies the stopping condition:
$$
\frac{|\boldsymbol{\sigma}^{(k+1)}-\boldsymbol{\sigma}^{(k)}|}{|\boldsymbol{\sigma}^{(k)}|} < 10^{-15}.
$$

\subsection{Numerical examples}\label{sec:numer:examples}

\subsubsection{Elliptical coated inclusions}\label{sec:coated ellipse}

Let $\Om$ be a multi-coated inclusion of the form \eqnref{def:Omj} with $\Om_0$ given by 
$\Psi(w)= w+\frac{0.25}{w}$ with $\gamma=1$. The corresponding 
Grunsky matrix is diagonal (see \eqnref{ellipsegrunsky}), and thus, $\FF^{(1)}$ and $\FF^{(2)}$ are diagonal by Theorem \ref{FPTsystem}.

\smallskip
\smallskip
\begin{example}\label{example:ellipse}\rm
We construct semi-neutral inclusions with $L=1,2$. 
For $L=1$, we set $r_1=1.1$ and $(N_1,N_2) = (1,1)$; for $L=2$, we set $(r_1,r_2)=(1.1,\, 1.2)$ and $(N_1,N_2) = (2,2)$. 
The conductivities in the core and the background are set to be $\sigma_0=5$ and $\sigma_{L+1}=1$.
We set the initial guess for the iteration \eqnref{initialguess} as $\boldsymbol{\sigma}^{(0)}=(\sigma_1^{(0)},\dots,\sigma_L^{(0)})$ with $\sigma_j^{(0)} = 2^{(-1)^j} \quad \mbox{for } j=1,\dots,L$.
We then choose the conductivity values in the coating layers by following the numerical scheme in Subsection \ref{subsec:FPT_reducing}.
The resulting values are $\sigma_1=0.1216$ for the $1$-coated ellipse (i.e., $L=1$), and $(\sigma_1,\sigma_2)=(0.0782,\, 3.6782)$ for the $2$-coated ellipse (i.e., $L=2$), respectively. 

Figure \ref{ellipse_all} indicates the potential perturbation due to the uncoated (first column), $1$-coated (second column), and $2$-coated (third column) inclusions, where the background field is either $H(x)=x_1$ or $H(x)=x_2$. 
Colored curves represent level curves of the perturbed potential $u$, where we compute $u$ based on the boundary integral formulation for the conductivity transmission problem with the Nystr\"{o}m discretization; we refer the reader to \cite[Chapter 17]{Ammari:2013:MSM} for the details and computation codes.
Coated ellipses exhibit much smaller perturbations than the uncoated ellipse. 
Table \ref{table_ellipse_all} shows the first two diagonal elements of FPTs, and verifies that the obtained coated ellipses are semi-neutral inclusions with $L=1,2$.

\end{example}

\begin{figure}[H]
\begin{subfigure}{\linewidth}
\centering
\captionsetup{justification=centering}
\begin{minipage}{0.22\linewidth}
\subcaption*{Uncoated}
\end{minipage}\hspace*{5mm}
\begin{minipage}{0.22\linewidth}
\subcaption*{1-coated}
\end{minipage}\hspace*{5mm}
\begin{minipage}{0.22\linewidth}
\subcaption*{2-coated}
\end{minipage}\hspace*{5mm}
\end{subfigure}\\
\begin{subfigure}{\linewidth}
\centering
\begin{minipage}{0.22\linewidth}
\includegraphics[width=\linewidth, trim={37mm 21mm 32mm 17mm}, clip, frame]{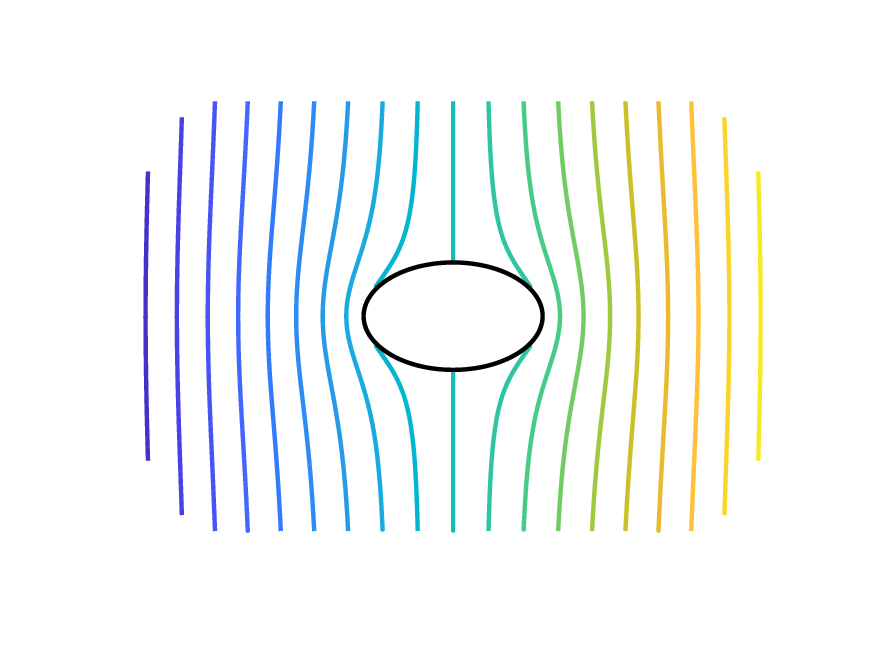}
\end{minipage}\hspace*{5mm}
\begin{minipage}{0.22\linewidth}
\includegraphics[width=\linewidth, trim={37mm 21mm 32mm 17mm}, clip, frame]{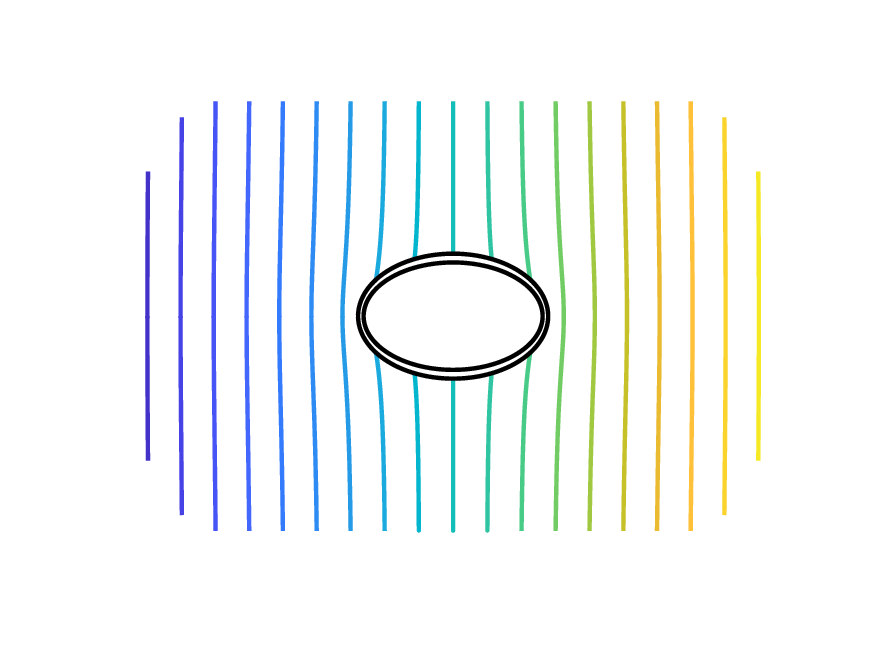}
\end{minipage}\hspace*{5mm}
\begin{minipage}{0.22\linewidth}
\includegraphics[width=\linewidth, trim={37mm 21mm 32mm 17mm}, clip, frame]{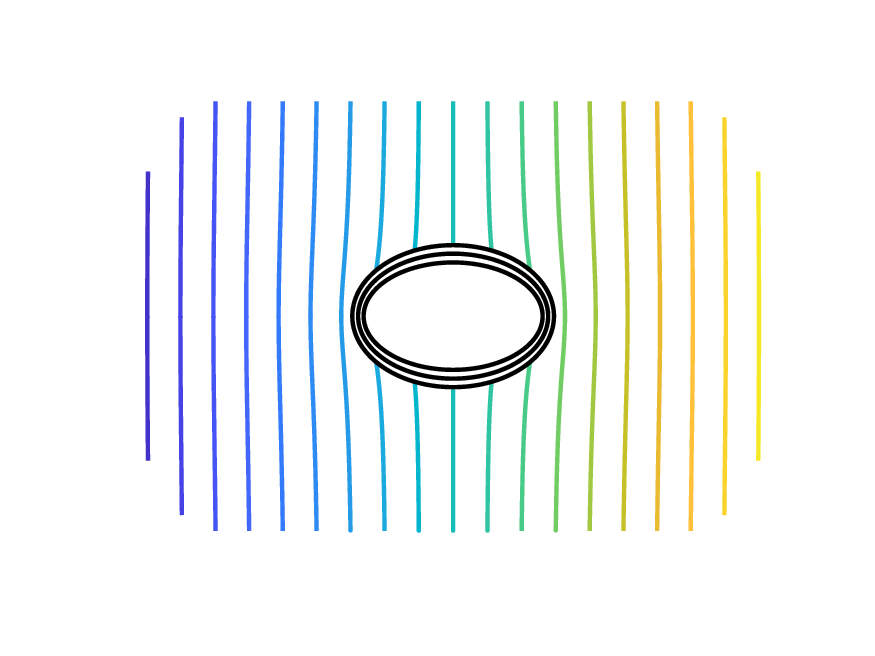}
\end{minipage}\hspace*{5mm}
\end{subfigure}\\[3mm]
\begin{subfigure}{\linewidth}
\centering
\begin{minipage}{0.22\linewidth}
\includegraphics[width=\linewidth, trim={37mm 21mm 32mm 17mm}, clip, frame]{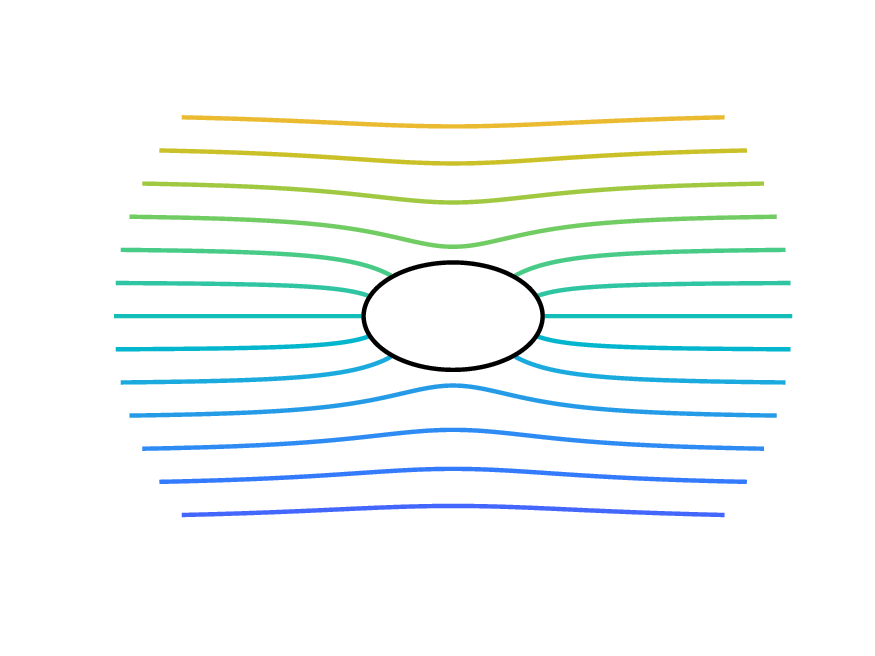}
\end{minipage}\hspace*{5mm}
\begin{minipage}{0.22\linewidth}
\includegraphics[width=\linewidth, trim={37mm 21mm 32mm 17mm}, clip, frame]{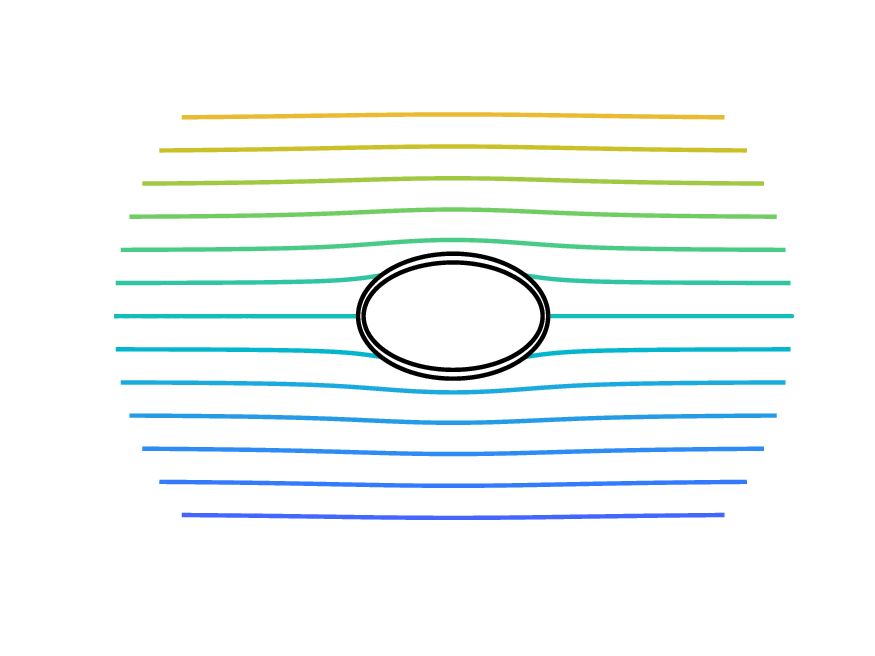}
\end{minipage}\hspace*{5mm}
\begin{minipage}{0.22\linewidth}
\includegraphics[width=\linewidth, trim={37mm 21mm 32mm 17mm}, clip, frame]{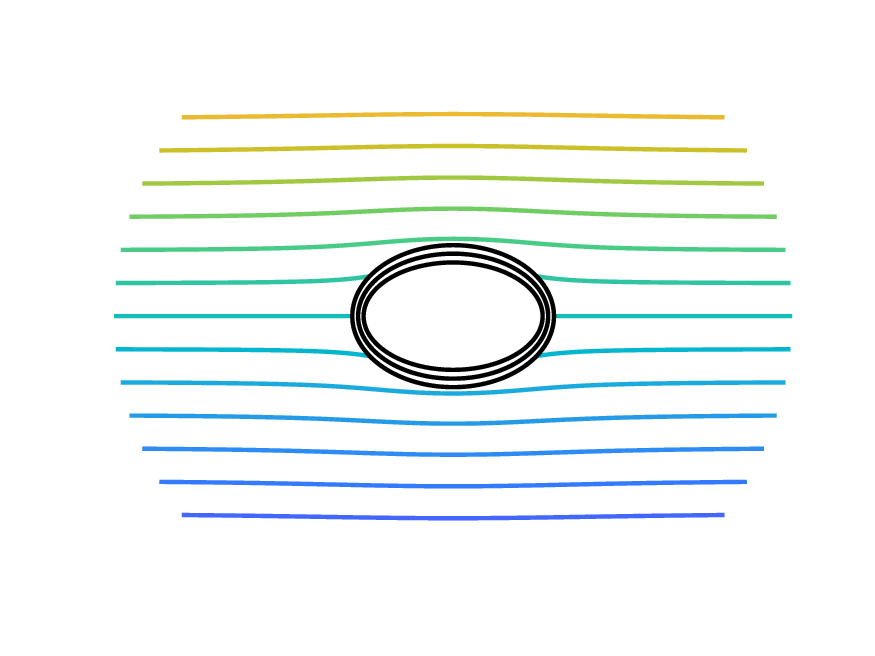}
\end{minipage}\hspace*{5mm}
\end{subfigure}
\caption{
Semi-neutral coated ellipses exhibit very small perturbations than the uncoated one.
}\label{ellipse_all}
\end{figure}
{\renewcommand{\arraystretch}{1.7}
\begin{table}[H]
\centering
\begin{tabular}{|c|c c|c c|}
\hline
& $\frac{\FF_{11}^{(1)}}{4\pi}$ & $\frac{\FF_{22}^{(1)}}{8\pi}$ & $\frac{\FF_{11}^{(2)}}{4\pi}$ & $\frac{\FF_{22}^{(2)}}{8\pi}$\\
\hline\hline
Uncoated & 0.1071 & 0.0277 & 0.6429 & 0.6652 \\
\hline
$1$--coated & 0.1866 & 0.0532 & $1.8803 \times10^{-15}$ & -0.3964 \\
\hline
$2$--coated & 0.2147& 0.0591 & $2.3761\times10^{-15}$ & $-3.5178\times10^{-15}$ \\
\hline
\end{tabular}
\caption{
The FPTs for the ellipse and coated ellipses in Example \ref{example:ellipse}. Since the core is an ellipse, $\FF_{mn}^{(1)}=\FF^{(2)}_{mn}=0$ for $m\neq n$. The values of the first FPTs, $\FF^{(1)}_{mn}$, of coated ellipses are similar to those corresponding to the core, i.e., the uncoated inclusion. The second FPTs, $\FF^{(2)}_{mn}$, vanish for $m,n\leq N$ with $N=1$ for the $1$-coated inclusion and $N=2$ for the $2$-coated inclusion.
}\label{table_ellipse_all}
\end{table}

\subsubsection{Non-elliptical shapes}
In this subsection, we provide two examples of non-elliptical shape. 
It is necessary to find the conductivity distribution that makes a coated inclusion semi-neutral. 
As Example 1, the uniform background field is given by $H(x)=x_1$ or $H(x)=x_2$.

\smallskip
\smallskip
\begin{example}\rm
Let us observe a kite-shaped domain with exterior conformal mapping,
$$\Psi(w) = w + \frac{0.1}{w} + \frac{0.25}{w^2} - \frac{0.05}{w^3} + \frac{0.05}{w^4} - \frac{0.04}{w^5} + \frac{0.02}{w^6}.$$ There is no rotational symmetry for this domain so that the vanishing property in Proposition \ref{rotationsym} does not hold. Figure \ref{kite} illustrates the potential perturbation $u$ on the background field $H$.
Table \ref{table_kite} shows the low-order FPTs. Each coated kite is semi-neutral because its second FPT is significantly smaller than the uncoated one.

In Figure \ref{kite}, the background field is given by $H(x)=x_1$. Colored curves represent contours of $u$. We commonly set $\gamma = 1$ and $\sigma_0 = 100$. We set the initial guess for the iteration \eqnref{initialguess} as $\boldsymbol{\sigma}^{(0)}=(\sigma_1^{(0)},\dots,\sigma_L^{(0)})$ with $\sigma_j^{(0)} =2^{(-1)^j}$ for $j=1,\dots,L$, except we put $\sigma_j^{(0)}  = 10^{(-1)^j}$ instead of $\sigma_j^{(0)}  = 2^{(-1)^j}$ for the $3$-coated kite in Figure \ref{kite}. 

For the $1$--coated kite, $r_1 = 1.1$, $\sigma_1=0.0964$, and $(N_1,N_2) = (1,1)$. For the $2$-coated kite, $(r_1,r_2) = (1.1,\,1.2)$, \ $(\sigma_1, \sigma_2)= (4.3533\times 10^{-6}, \,11.5212)$, and $(N_1,N_2) = (1,2)$. For the $3$-coated kite, $(r_1,r_2,r_3) = (1.1,\,1.2,\,1.3)$, \ $(\sigma_1, \sigma_2, \sigma_3)= (5.8354\times 10^{-6}, \,17.0950, \, 0.2358)$, and $(N_1,N_2) = (2,2)$. 
\end{example}

\begin{figure}[H]
\begin{subfigure}{\linewidth}
\centering
\captionsetup{justification=centering}
\begin{minipage}{0.22\linewidth}
\subcaption*{Uncoated}
\end{minipage}\hspace*{5mm}
\begin{minipage}{0.22\linewidth}
\subcaption*{1-coated}
\end{minipage}\hspace*{5mm}
\begin{minipage}{0.22\linewidth}
\subcaption*{2-coated}
\end{minipage}\hspace*{5mm}
\begin{minipage}{0.22\linewidth}
\subcaption*{3-coated}
\end{minipage}\hspace*{5mm}
\end{subfigure}\\
\begin{subfigure}{\linewidth}
\centering
\begin{minipage}{0.22\linewidth}
\includegraphics[width=\linewidth, trim={37mm 21mm 32mm 17mm}, clip, frame]{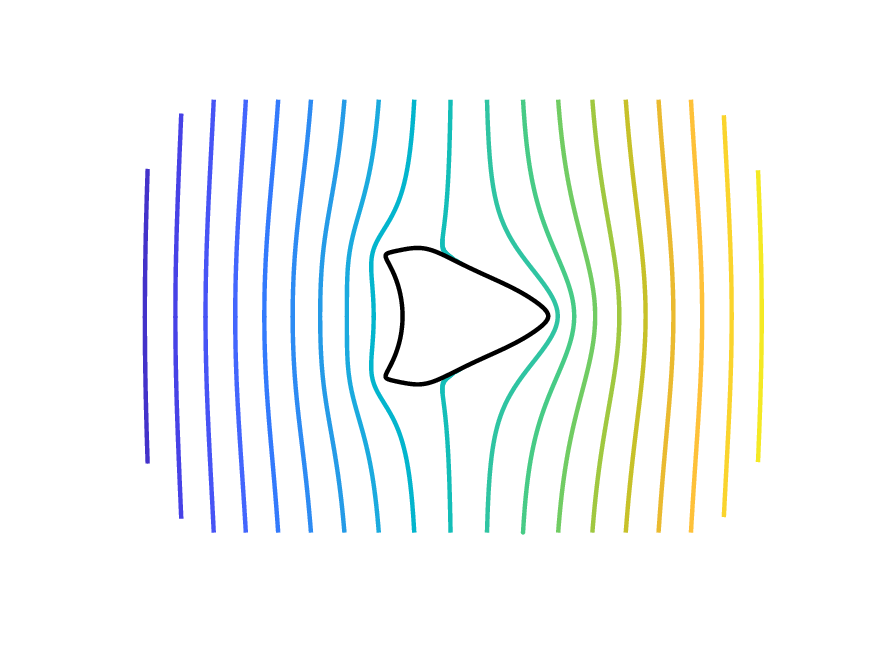}
\end{minipage}\hspace*{5mm}
\begin{minipage}{0.22\linewidth}
\includegraphics[width=\linewidth, trim={37mm 21mm 32mm 17mm}, clip, frame]{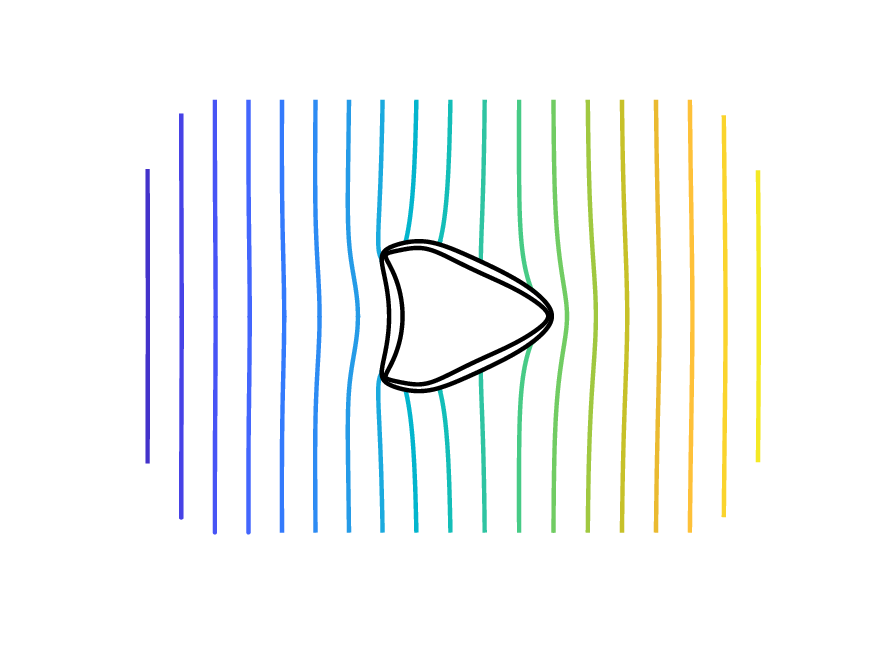}
\end{minipage}\hspace*{5mm}
\begin{minipage}{0.22\linewidth}
\includegraphics[width=\linewidth, trim={37mm 21mm 32mm 17mm}, clip, frame]{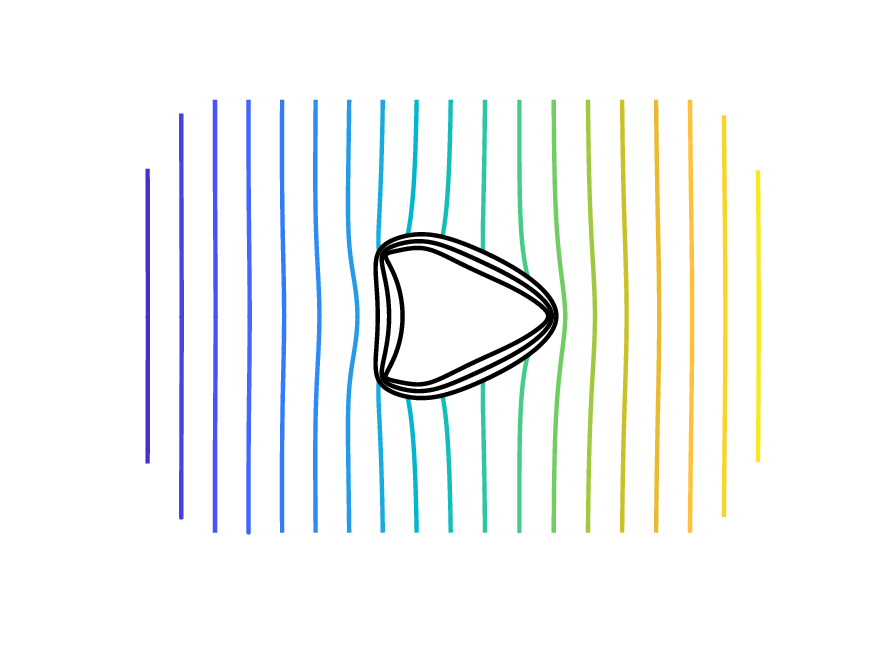}
\end{minipage}\hspace*{5mm}
\begin{minipage}{0.22\linewidth}
\includegraphics[width=\linewidth, trim={37mm 21mm 32mm 17mm}, clip, frame]{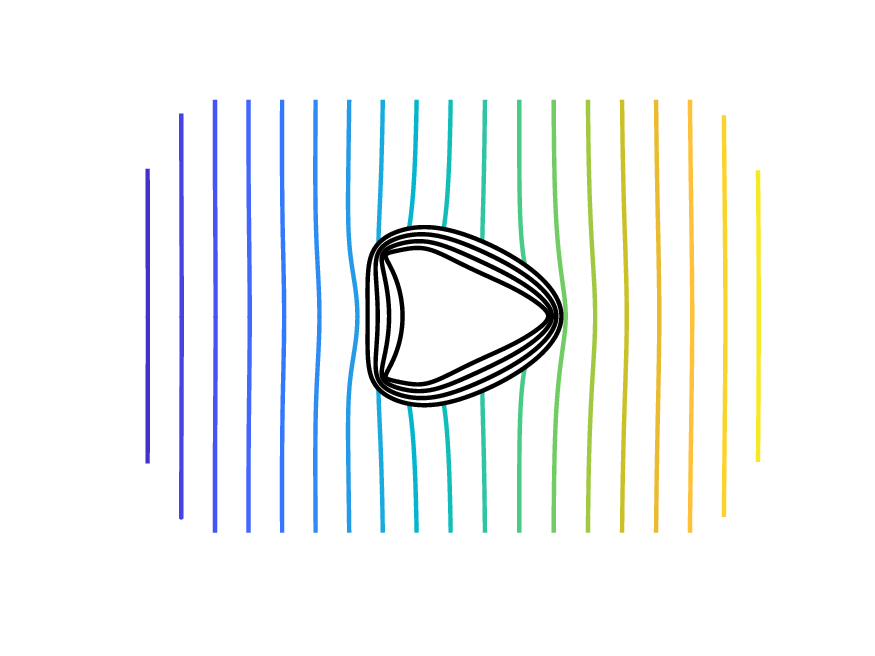}
\end{minipage}\hspace*{5mm}
\end{subfigure}\\[3mm]
\begin{subfigure}{\linewidth}
\centering
\begin{minipage}{0.22\linewidth}
\includegraphics[width=\linewidth, trim={37mm 21mm 32mm 17mm}, clip, frame]{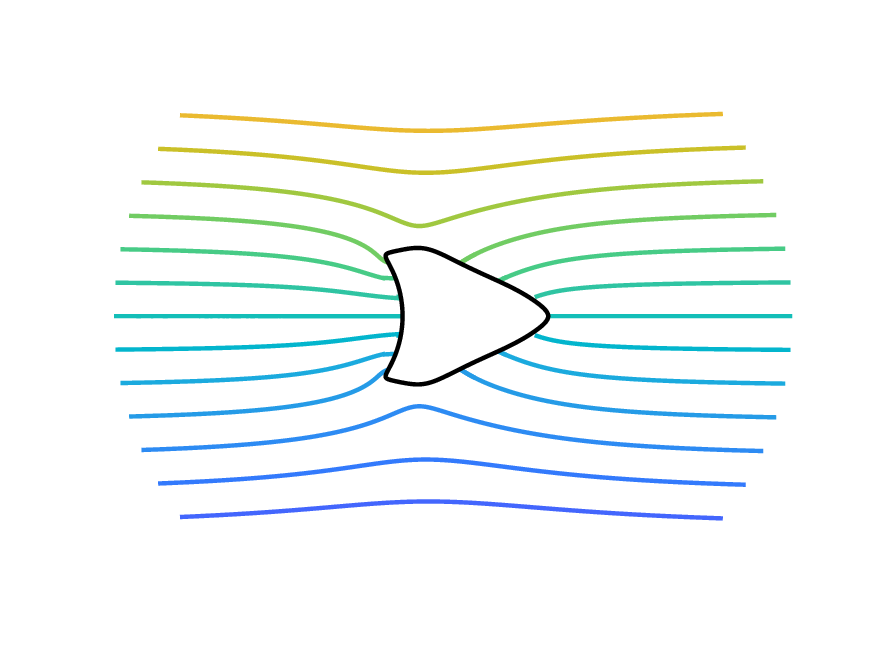}
\end{minipage}\hspace*{5mm}
\begin{minipage}{0.22\linewidth}
\includegraphics[width=\linewidth, trim={37mm 21mm 32mm 17mm}, clip, frame]{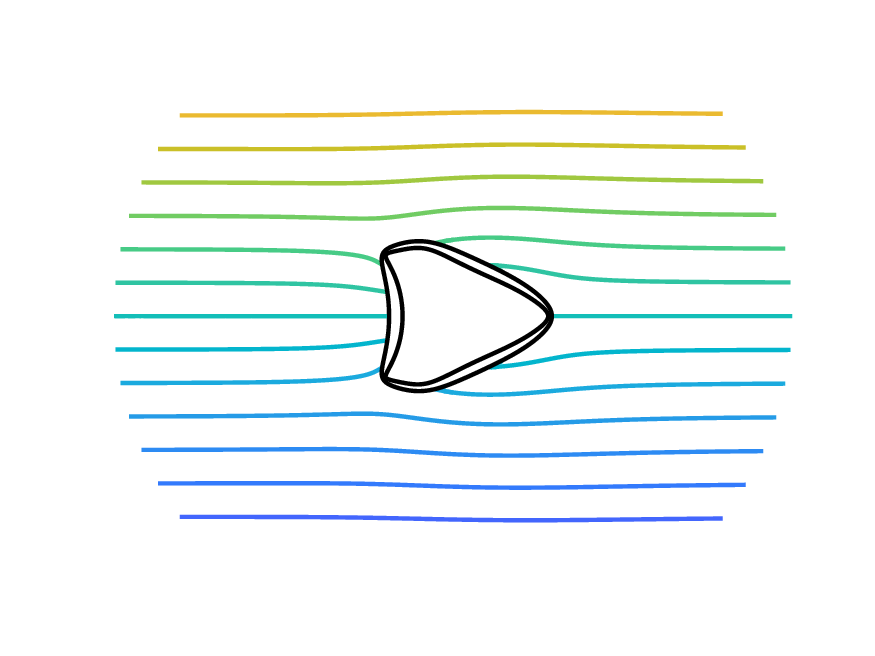}
\end{minipage}\hspace*{5mm}
\begin{minipage}{0.22\linewidth}
\includegraphics[width=\linewidth, trim={37mm 21mm 32mm 17mm}, clip, frame]{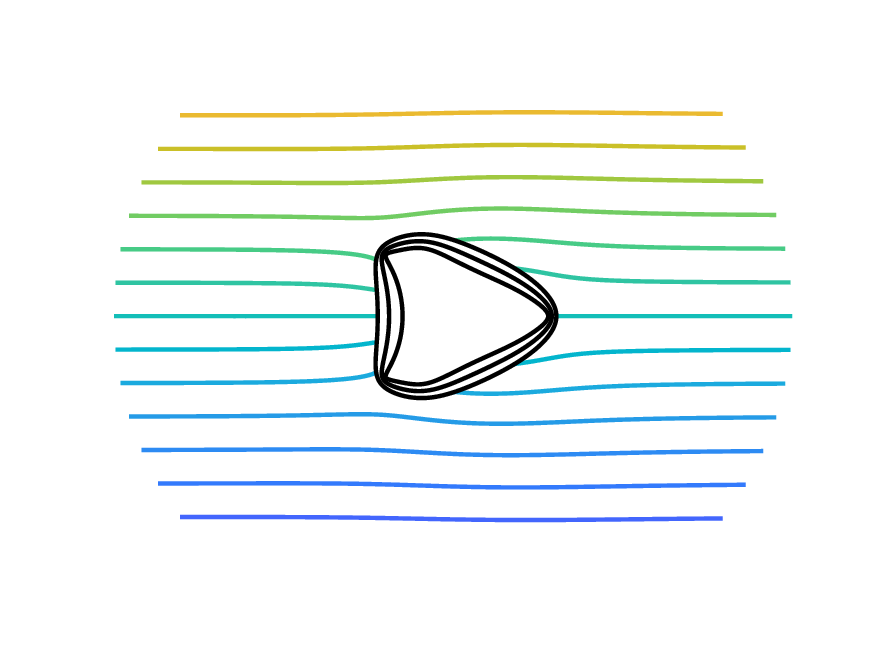}
\end{minipage}\hspace*{5mm}
\begin{minipage}{0.22\linewidth}
\includegraphics[width=\linewidth, trim={37mm 21mm 32mm 17mm}, clip, frame]{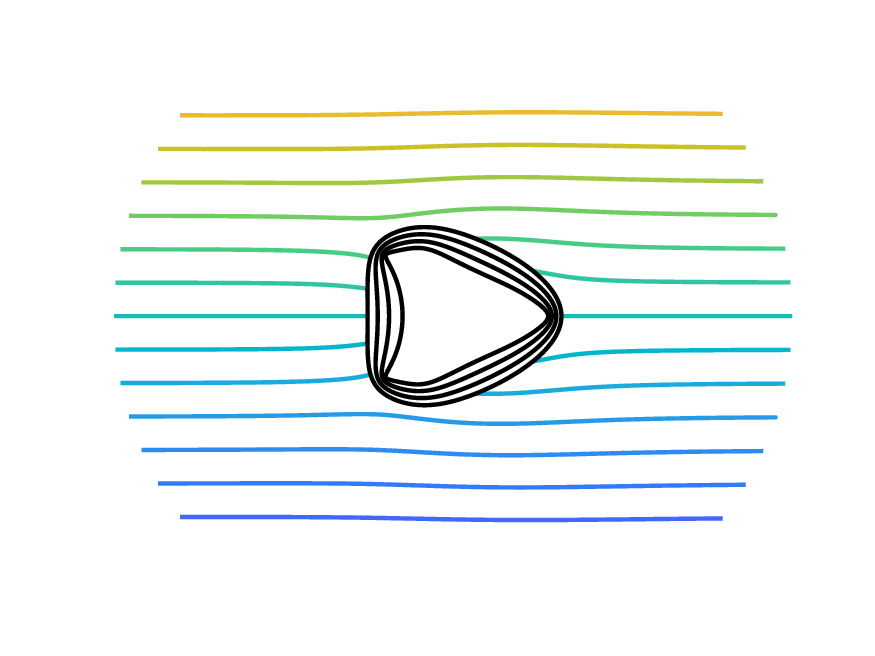}
\end{minipage}\hspace*{5mm}
\end{subfigure}
\caption{Semi-neutral coated kites show much smaller perturbations than the uncoated one.}\label{kite}
\end{figure}
{\renewcommand{\arraystretch}{1.7}
\begin{table}[H]
\centering
\begin{tabular}{|c|cc|cc|}
\hline
& $\frac{\FF_{11}^{(1)}}{4\pi}$ & $\frac{\FF_{12}^{(1)}}{8\pi}$ & $\frac{\FF_{11}^{(2)}}{4\pi}$ & $\frac{\FF_{12}^{(2)}}{8\pi}$\\
& $\frac{\FF_{21}^{(1)}}{4\pi}$ & $\frac{\FF_{22}^{(1)}}{8\pi}$ & $\frac{\FF_{21}^{(2)}}{4\pi}$ & $\frac{\FF_{22}^{(2)}}{8\pi}$\\
\hline\hline
\multirow{2}{*}{Uncoated} 
& 0.0956 & 0.2380	& 0.9726 & $7.6388 \times 10^{-4}$ \\
& 0.4760 & -0.0849 	& 0.0015 & 0.9720 \\
\hline
\multirow{2}{*}{$1$--coated} 
& 0.0987 & 0.2473	& $1.9410 \times 10^{-15}$ & $1.7713 \times 10^{-4}$ \\
& 0.4946 & -0.0891	& $3.5426 \times 10^{-4}$ & -0.4777 \\
\hline
\multirow{2}{*}{$2$--coated} 
& 0.1000 & 0.2500	& $3.8723 \times 10^{-13}$ & $2.3846 \times 10^{-13}$ \\
& 0.5000 & -0.0900 	& $4.7696 \times 10^{-13}$ & 0.6842 \\
\hline
\multirow{2}{*}{$3$--coated} 
& 0.1000 & 0.2500	& $-2.3285 \times 10^{-7}$ & $1.3125 \times 10^{-14}$ \\
& 0.5000 & -0.0900 	& $2.6250 \times 10^{-14}$ & $9.4847 \times 10^{-13}$ \\
\hline
\end{tabular}
\caption{Low-order FPTs of the uncoated and coated kites in Figure \ref{kite}. As we coat the inclusion, the first FPT is almost invariant, but the second FPTs decrease significantly.}\label{table_kite}
\end{table}
}

\smallskip
\smallskip
\begin{example}\rm
In this example, we assume that the core $\Om_0$ is given by the conformal mapping $$\Psi(w) = w + \frac{0.2}{w^4}.$$ Since the resulting star-shape domain has rotational symmetry of order $5$, the associated FPTs show the following periodicity from Proposition \ref{rotationsym}: 
\begin{align*}
&\FF_{mn}^{(1)} = 0 \quad \mbox{for } m+n \not\equiv 0 \ (\mbox{mod } 5),\\
&\FF_{mn}^{(2)} = 0 \quad \mbox{for } m-n \not\equiv 0 \ (\mbox{mod } 5).
\end{align*}
Then, $\FF_{mn}^{(1)}=0$ for all $m,n\le 2$, and $\FF_{mn}^{(2)}=0$ for all $m\neq n$ with $m,n \le 5$. Hence, we focus only on vanishing the nonzero leading terms of FPTs.
Figure \ref{star} that illustrates contours of the potential perturbation for a given background field $H$. Table \ref{table_star} compares the leading FPTs of domains in Figure \ref{star}.

In Figure \ref{star}, the background field is given by $H(x)=x_2$. Colored curves represent contours of $u$. We commonly set $\gamma = 1$ and $\sigma_0 = 0.25$. The initial guess for the iteration \eqnref{initialguess} is given by $\boldsymbol{\sigma}^{(0)}=(\sigma_1^{(0)},\dots,\sigma_L^{(0)})$ with $\sigma_j^{(0)} = 10^{(-1)^{j+1}} \quad \mbox{for } j=1,\dots,L$. 

For the $1$--coated star, $r_1 = 1.1$, $\sigma_1= 7.0224$, and $(N_1,N_2)=(1,1)$. For the $2$-coated star, $(r_1,r_2) = (1.1,\,1.2)$, $(\sigma_1, \sigma_2)= (11.4177, \,0.2763)$, and $(N_1,N_2)=(2,2)$. For the $3$-coated star, $(r_1,r_2,r_3) = (1.1,\,1.2,\,1.3)$, $(\sigma_1, \sigma_2, \sigma_3)= (540.1332,\, 0.0595,\, 4.2144)$, and $(N_1,N_2)=(2,6)$

\end{example}

\begin{figure}[H]
\begin{subfigure}{\linewidth}
\centering
\captionsetup{justification=centering}
\begin{minipage}{0.22\linewidth}
\subcaption*{Uncoated}
\end{minipage}\hspace*{5mm}
\begin{minipage}{0.22\linewidth}
\subcaption*{1-coated}
\end{minipage}\hspace*{5mm}
\begin{minipage}{0.22\linewidth}
\subcaption*{2-coated}
\end{minipage}\hspace*{5mm}
\begin{minipage}{0.22\linewidth}
\subcaption*{3-coated}
\end{minipage}\hspace*{5mm}
\end{subfigure}\\
\begin{subfigure}{\linewidth}
\centering
\begin{minipage}{0.22\linewidth}
\includegraphics[width=\linewidth, trim={37mm 21mm 32mm 17mm}, clip, frame]{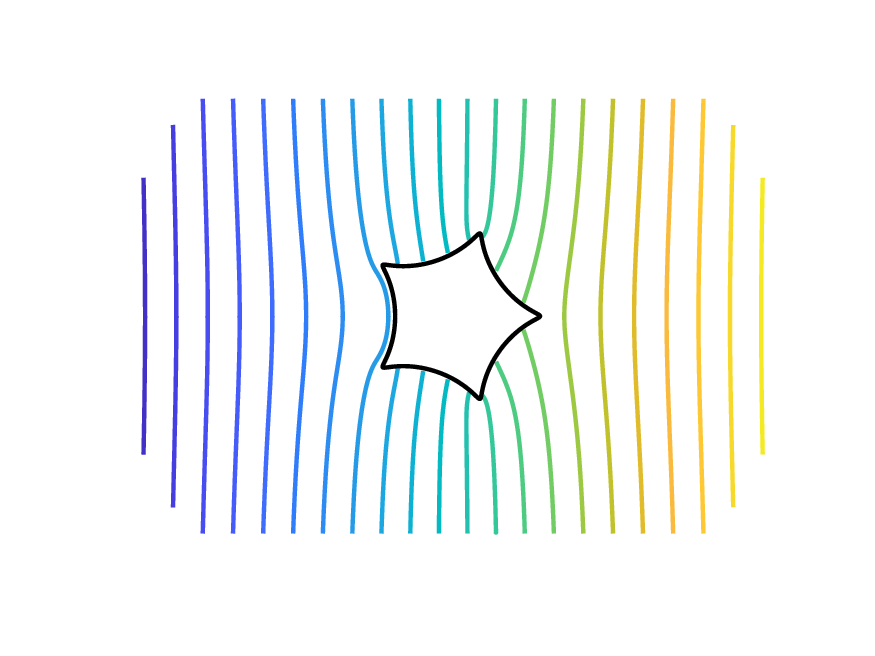}
\end{minipage}\hspace*{5mm}
\begin{minipage}{0.22\linewidth}
\includegraphics[width=\linewidth, trim={37mm 21mm 32mm 17mm}, clip, frame]{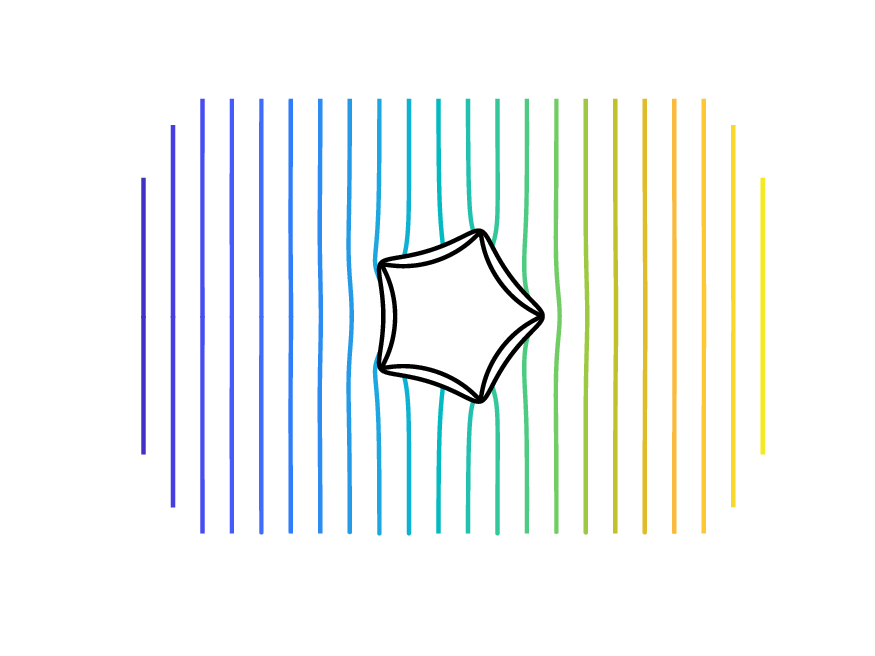}
\end{minipage}\hspace*{5mm}
\begin{minipage}{0.22\linewidth}
\includegraphics[width=\linewidth, trim={37mm 21mm 32mm 17mm}, clip, frame]{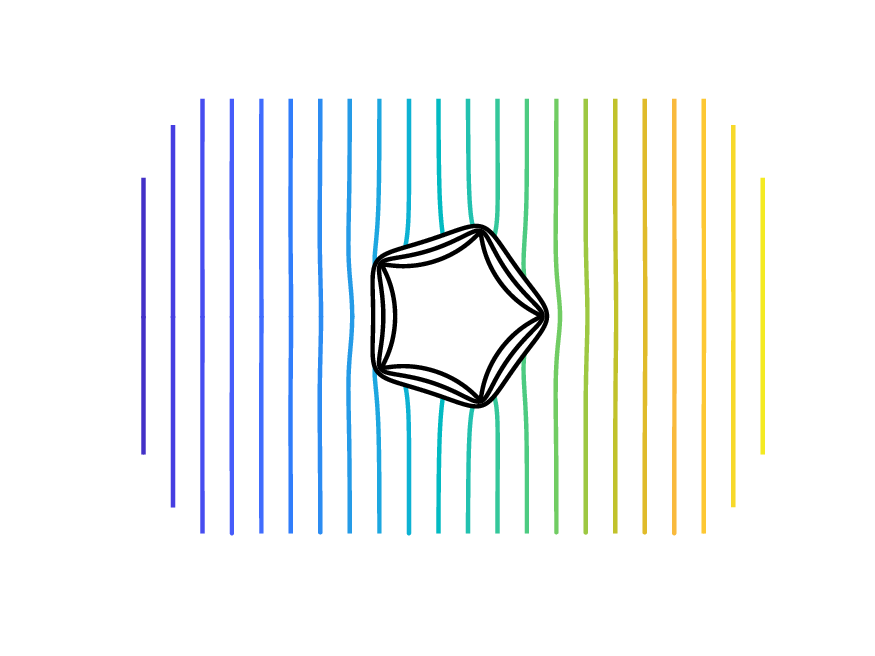}
\end{minipage}\hspace*{5mm}
\begin{minipage}{0.22\linewidth}
\includegraphics[width=\linewidth, trim={37mm 21mm 32mm 17mm}, clip, frame]{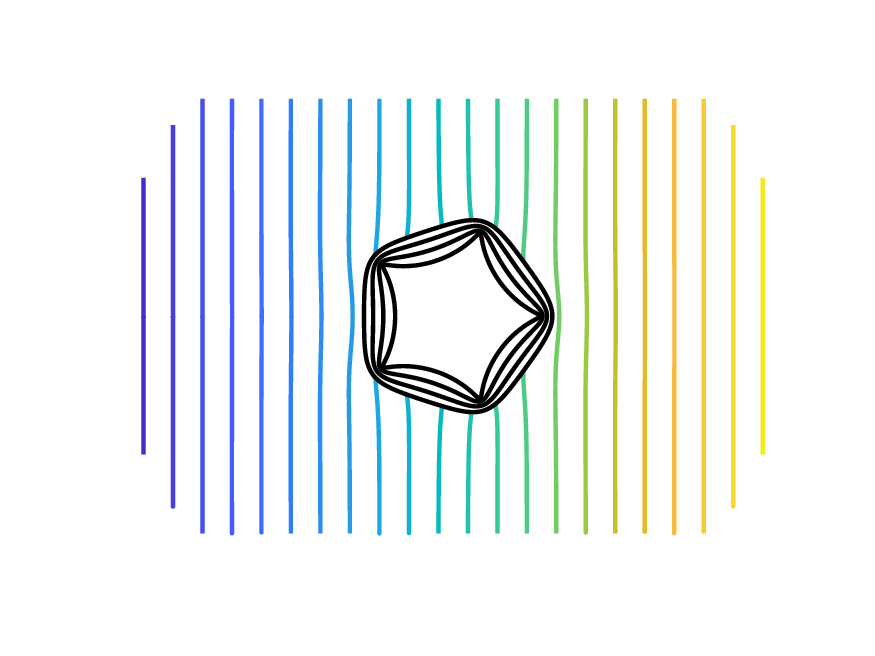}
\end{minipage}\hspace*{5mm}
\end{subfigure}\\[3mm]
\begin{subfigure}{\linewidth}
\centering
\begin{minipage}{0.22\linewidth}
\includegraphics[width=\linewidth, trim={37mm 21mm 32mm 17mm}, clip, frame]{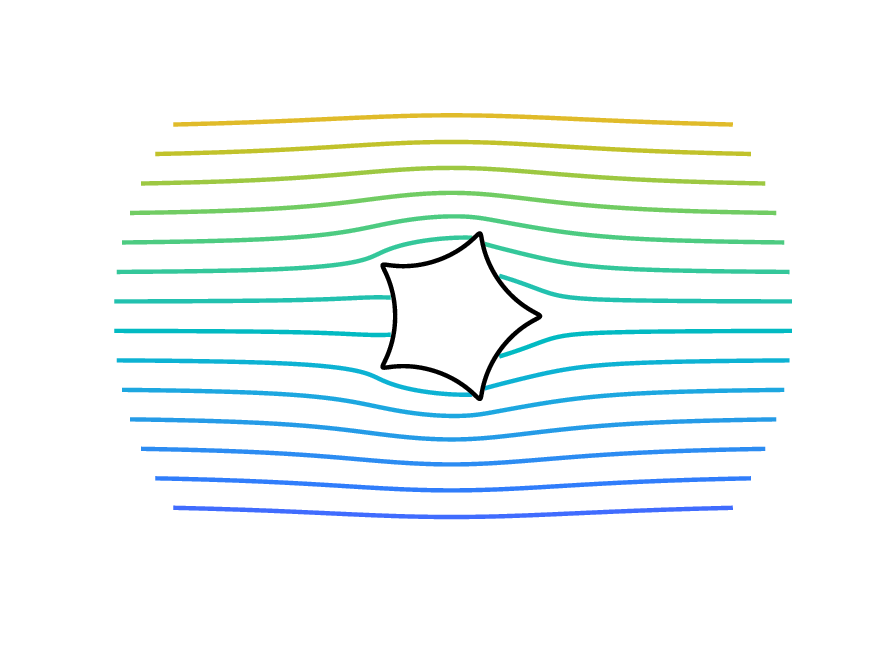}
\end{minipage}\hspace*{5mm}
\begin{minipage}{0.22\linewidth}
\includegraphics[width=\linewidth, trim={37mm 21mm 32mm 17mm}, clip, frame]{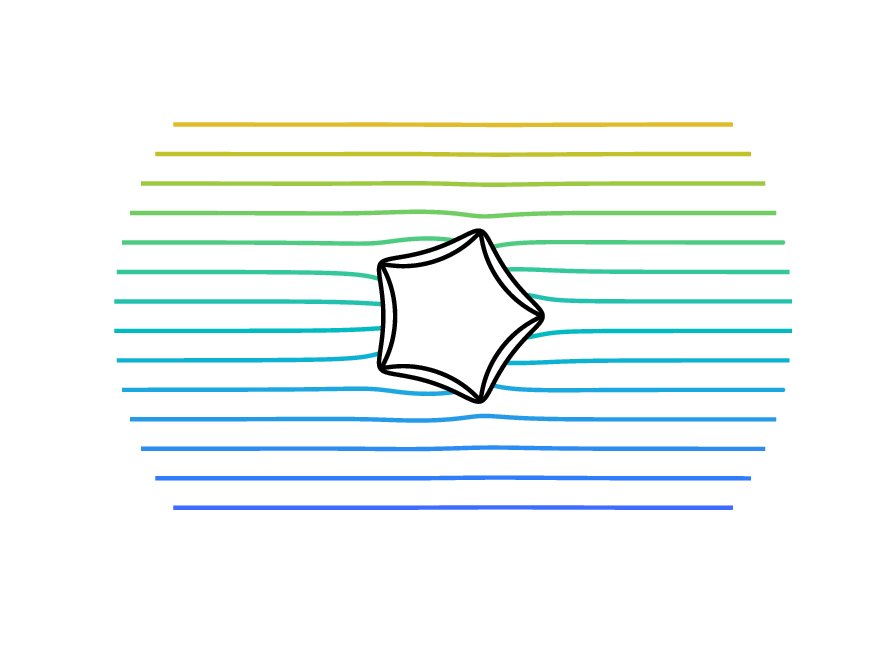}
\end{minipage}\hspace*{5mm}
\begin{minipage}{0.22\linewidth}
\includegraphics[width=\linewidth, trim={37mm 21mm 32mm 17mm}, clip, frame]{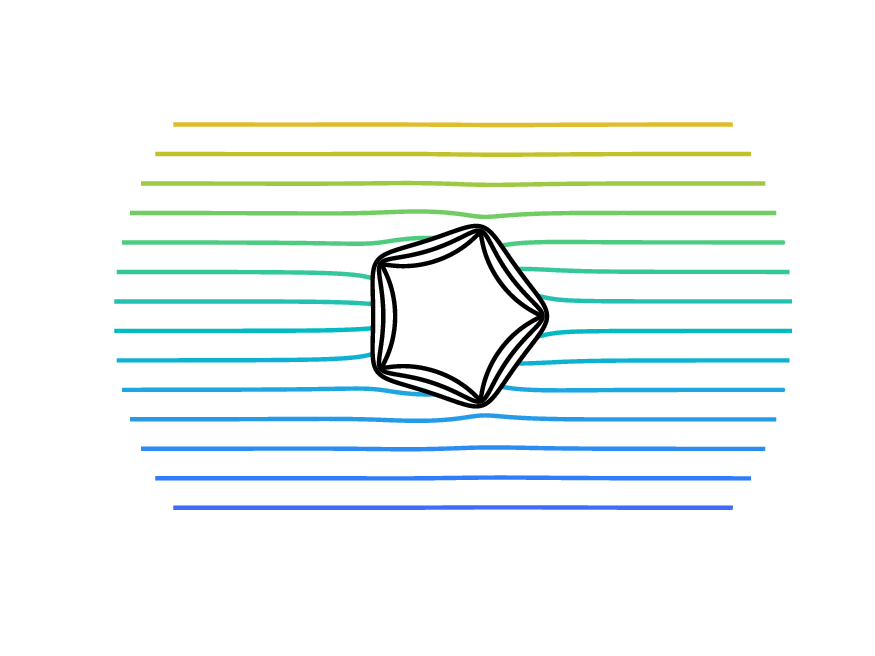}
\end{minipage}\hspace*{5mm}
\begin{minipage}{0.22\linewidth}
\includegraphics[width=\linewidth, trim={37mm 21mm 32mm 17mm}, clip, frame]{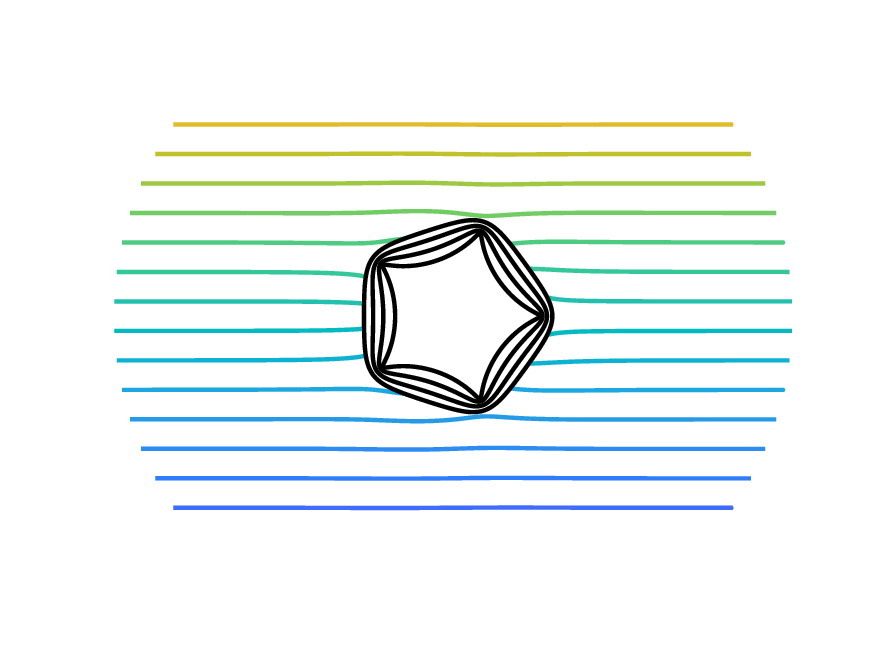}
\end{minipage}\hspace*{5mm}
\end{subfigure}
\caption{Semi-neutral coated stars have quite smaller perturbations than the uncoated star.}\label{star}
\end{figure}
{\renewcommand{\arraystretch}{1.7}
\begin{table}[H]
\centering
\begin{tabular}{|c|cc|ccc|}
\hline
& $\frac{\FF_{14}^{(1)}}{16\pi}$ & $\frac{\FF_{23}^{(1)}}{12\pi}$ & $\frac{\FF_{11}^{(2)}}{4\pi}$ & $\frac{\FF_{16}^{(2)}}{24\pi}$ & 	$\frac{\FF_{22}^{(2)}}{8\pi}$\\
\hline\hline
Uncoated & 0.0593 & 0.1142
& -0.5325 & 0.0206 & -0.4941 \\
\hline
$1$--coated & 0.1507 & 0.3083
& $1.4624\times 10^{-15}$ & 0.0092 & 0.3766 \\
\hline
$2$--coated & 0.1823 & 0.3699
& $-5.5394\times 10^{-15}$ & 0.0030 & $-1.5689\times 10^{-14}$ \\
\hline
$3$--coated & 0.2000 & 0.4000
& $-7.5966\times 10^{-13}$ & $2.6694\times 10^{-7}$ & $2.3356\times 10^{-13}$ \\
\hline
\end{tabular}
\caption{The leading terms of the FPTs for the uncoated and coated stars in Figure \ref{star}. As we coat the inclusion, the second FPTs vanish.}\label{table_star}
\end{table}
}

\bibliography{2020_GME_JMPA_ref}{}

\begin{thebibliography}{10}

\bibitem{Ahlfors:1963:QR}
Lars~V. Ahlfors.
\newblock Quasiconformal reflections.
\newblock {\em Acta Math.}, 109:291--301, 1963.

\bibitem{Alu:2005:ATP}
A.~Al{\`u} and N.~Engheta.
\newblock Achieving transparency with plasmonic and metamaterial coatings.
\newblock {\em Phys. Rev. - Stat. Nonlinear Soft Matter Phys.}, 72(1), 2005.

\bibitem{Ammari:2018:MNF}
H.~Ammari, D.S. Choi, and S.~Yu.
\newblock A mathematical and numerical framework for near-field optics.
\newblock {\em Proc. R. Soc. A: Math. Phys. Eng. Sci.}, 474(2217), 2018.

\bibitem{Ammari:2013:MSM}
H.~Ammari, J.~Garnier, W.~Jing, H.~Kang, M.~Lim, K.~S{\o}lna, and H.~Wang.
\newblock {\em {Mathematical and Statistical Methods for Multistatic Imaging}},
  volume 2098 of {\em Lecture Notes in Mathematics}.
\newblock Springer International Publishing, 2013.

\bibitem{Ammari:2007:PMT}
H.~Ammari and H.~Kang.
\newblock {\em {Polarization and Moment Tensors With Applications to Inverse
  Problems and Effective Medium Theory}}, volume 162 of {\em Applied
  Mathematical Sciences}.
\newblock Springer-Verlag New York, 2007.

\bibitem{Ammari:2013:ENC1}
H.~Ammari, H.~Kang, H.~Lee, and M.~Lim.
\newblock {Enhancement of near-cloaking using generalized polarization tensors
  vanishing structures. Part I: The conductivity problem}.
\newblock {\em Commun. Math. Phys.}, 317(1):253--266, 2013.

\bibitem{Ammari:2013:ENCm}
H.~Ammari, H.~Kang, H.~Lee, M.~Lim, and S.~Yu.
\newblock {Enhancement of near cloaking for the full Maxwell equations}.
\newblock {\em SIAM J. Appl. Math.}, 73(6):2055--2076, 2013.

\bibitem{Ando:2016:APR}
K.~Ando and H.~Kang.
\newblock {Analysis of plasmon resonance on smooth domains using spectral
  properties of the Neumann--Poincar{\'e} operator}.
\newblock {\em J. Math. Anal. Appl.}, 435(1):162--178, 2016.

\bibitem{Beckermann:2018:BOP}
B.~Beckermann and N.~Stylianopoulos.
\newblock Bergman orthogonal polynomials and the {G}runsky matrix.
\newblock {\em Constr. Approx.}, 47(2):211--235, 2018.

\bibitem{Bonnetier:2012:PBG}
{\'E}.~Bonnetier and F.~Triki.
\newblock {Pointwise bounds on the gradient and the spectrum of the
  Neumann--Poincar{\'e} operator: The case of 2 discs}.
\newblock {\em Contemp. Math.}, 577:81--92, 2012.

\bibitem{Caratheodory:1913:GBR}
C.~Carath\'{e}odory.
\newblock {\"{U}ber die gegenseitige Beziehung der R\"{a}nder bei der konformen
  Abbildung des Inneren einer Jordanschen Kurve auf einen Kreis}.
\newblock {\em Math. Ann.}, 73(2):305--320, 1913.

\bibitem{Cherkaev:2021:GSE}
E.~Cherkaev, M.~Kim, and M.~Lim.
\newblock {Geometric series expansion of the Neumann--Poincar{\'e} operator:
  application to composite materials}.
\newblock {\em To appear in Eur. J. Appl. Math.}, 2021.

\bibitem{Choi:2020:ASR}
D.~Choi, J.~Kim, and M.~Lim.
\newblock Analytical shape recovery of a conductivity inclusion based on faber
  polynomials.
\newblock {\em Math. Ann.}, 2020.

\bibitem{Choi:2021:EEC}
D.~Choi, K.~Kim, and M.~Lim.
\newblock An extension of the eshelby conjecture to domains of general shape in
  anti-plane elasticity.
\newblock {\em J. Math. Anal. Appl.}, 495(2), 2021.

\bibitem{Choi:2018:CEP}
D.S. Choi, J.~Helsing, and M.~Lim.
\newblock Corner effects on the perturbation of an electric potential.
\newblock {\em SIAM J. Appl. Math.}, 78(3):1577--1601, 2018.

\bibitem{Duren:1983:UF}
P.~L. Duren.
\newblock {\em {Univalent Functions}}, volume 259 of {\em Grundlehren der
  Mathematischen Wissenschaften}.
\newblock Springer-Verlag New York, 1983.

\bibitem{Escauriaza:1992:RTW}
L.~Escauriaza, E.B. Fabes, and G.~Verchota.
\newblock {On a regularity theorem for weak solutions to transmission problems
  with internal Lipschitz boundaries}.
\newblock {\em Proc. Am. Math. Soc.}, 115(4):1069--1076, 1992.

\bibitem{Escauriaza:1993:RPS}
L.~Escauriaza and J.K. Seo.
\newblock Regularity properties of solutions to transmission problems.
\newblock {\em Trans. Am. Math. Soc.}, 338(1):405--430, 1993.

\bibitem{Faber:1903:PE}
G.~Faber.
\newblock {{\"U}ber polynomische Entwicklungen}.
\newblock {\em Math. Ann.}, 57(3):389--408, 1903.

\bibitem{Feng:2017:CGV}
T.~Feng, H.~Kang, and H.~Lee.
\newblock {Construction of GPT-vanishing structures using shape derivative}.
\newblock {\em J. Comput. Math.}, 35(5):569--585, 2017.

\bibitem{Grabovsky:1995:MME1}
Y.~Grabovsky and R.V. Kohn.
\newblock {Microstructures minimizing the energy of a two phase elastic
  composite in two space dimensions. I: The confocal ellipse construction}.
\newblock {\em J. Mech. Phys. Solids}, 43(6):933--947, 1995.

\bibitem{Grunsky:1939:KSA}
H.~Grunsky.
\newblock {Koeffizientenbedingungen f\"{u}r schlicht abbildende meromorphe
  Funktionen}.
\newblock {\em Math. Z.}, 45(1):29--61, 1939.

\bibitem{Hashin:1962:EMH}
Z.~Hashin.
\newblock The elastic moduli of heterogeneous materials.
\newblock {\em J. Appl. Mech.}, 29(1):143--150, 1960.

\bibitem{Hashin:1985:LIE}
Z.~Hashin.
\newblock Large isotropic elastic deformation of composites and porous media.
\newblock {\em Int. J. Solids Struct.}, 21(7):711--720, 1985.

\bibitem{Hashin:1962:VAT}
Z.~Hashin and S.~Shtrikman.
\newblock A variational approach to the theory of the effective magnetic
  permeability of multiphase materials.
\newblock {\em J. Appl. Phys.}, 33(10):3125--3131, 1962.

\bibitem{Helsing:2013:SIE}
J.~Helsing.
\newblock {Solving integral equations on piecewise smooth boundaries using the
  RCIP method: A tutorial}.
\newblock {\em Abstr. Appl. Anal.}, 2013, 2013.

\bibitem{Helsing:2017:CSN}
J.~Helsing, H.~Kang, and M.~Lim.
\newblock {Classification of spectra of the Neumann--Poincar{\'e} operator on
  planar domains with corners by resonance}.
\newblock {\em Ann. Inst. Henri Poincare (C) Anal. Non Lineaire},
  34(4):991--1011, 2017.

\bibitem{Jimenez:2013:NNI}
S.~Jim{\'e}nez, B.~Vernescu, and W.~Sanguinet.
\newblock {Nonlinear neutral inclusions: Assemblages of spheres}.
\newblock {\em Int. J. Solids Struct.}, 50(14-15):2231--2238, 2013.

\bibitem{Johnston:1987:FER}
E.~H. Johnston.
\newblock Faber expansions of rational and entire functions.
\newblock {\em SIAM J. Math. Anal.}, 18(5):1235--1247, 1987.

\bibitem{Jung:2020:DEE}
Y.~Jung and M.~Lim.
\newblock {A decay estimate for the eigenvalues of the Neumann--Poincar\'{e}
  operator using the Grunsky coefficients}.
\newblock {\em Proc. Am. Math. Soc.}, 148(2):591--600, 2020.

\bibitem{Jung:2021:SEL}
Younghoon Jung and Mikyoung Lim.
\newblock Series expansions of the layer potential operators using the {F}aber
  polynomials and their applications to the transmission problem.
\newblock {\em SIAM J. Math. Anal.}, 53(2):1630--1669, 2021.

\bibitem{Kang:2015:LPA}
H.~Kang.
\newblock Layer potential approaches to interface problems.
\newblock In {\em Inverse problems and imaging}, volume~44 of {\em Panoramas et
  Synth{\`e}ses}, pages 63--110. Societe Mathematique de France, Paris, 2014.

\bibitem{Kang:2014:CIF}
H.~Kang and H.~Lee.
\newblock Coated inclusions of finite conductivity neutral to multiple fields
  in two-dimensional conductivity or anti-plane elasticity.
\newblock {\em Eur. J. Appl. Math.}, 25(3):329--338, 2014.

\bibitem{Kang:2016:OBV}
H.~Kang, H.~Lee, and S.~Sakaguchi.
\newblock An over-determined boundary value problem arising from neutrally
  coated inclusions in three dimensions.
\newblock {\em Ann. Sc. Norm. Super. Pisa - Cl. Sci.}, 16(4):1193--1208, 2016.

\bibitem{Kang:2019:CWN}
H.~Kang and X.~Li.
\newblock Construction of weakly neutral inclusions of general shape by
  imperfect interfaces.
\newblock {\em SIAM J. Appl. Math.}, 79(1):396--414, 2019.

\bibitem{Kellogg:1929:FPT}
O.D. Kellogg.
\newblock {\em {Foundations of Potential Theory}}, volume~31 of {\em Die
  Grundlehren der Mathematischen Wissenschaften}.
\newblock Springer-Verlag Berlin Heidelberg, 1929.

\bibitem{Kerker:1975:IB}
M.~Kerker.
\newblock Invisible bodies.
\newblock {\em J. Opt. Soc. Am.}, 65(4):376--379, 1975.

\bibitem{Khavinson:2007:PVP}
D.~Khavinson, M.~Putinar, and H.S. Shapiro.
\newblock Poincar\'{e}'s variational problem in potential theory.
\newblock {\em Arch. Ration. Mech. Anal.}, 185(1):143--184, 2007.

\bibitem{Kuehnau:1971:VKG}
R.~K\"{u}hnau.
\newblock Verzerrungss\"{a}tze und {K}oeffizientenbedingungen vom
  {G}runskyschen {T}yp f\"{u}r quasikonforme {A}bbildungen.
\newblock {\em Math. Nachr.}, 48:77--105, 1971.

\bibitem{Landy:2013:FPU}
N.~Landy and D.R. Smith.
\newblock A full-parameter unidirectional metamaterial cloak for microwaves.
\newblock {\em Nat. Mater.}, 12(1):25--28, 2013.

\bibitem{Lim:2001:SBI}
M.~Lim.
\newblock Symmetry of a boundary integral operator and a characterization of a
  ball.
\newblock {\em Ill. J. Math.}, 45(2):537--543, 2001.

\bibitem{Milton:2002:TC}
G.W. Milton.
\newblock {\em {The Theory of Composites}}, volume~6 of {\em Cambridge
  Monographs on Applied and Computational Mathematics}.
\newblock Cambridge University Press, Cambridge, 2002.

\bibitem{Milton:2001:NCI}
G.W. Milton and S.K. Serkov.
\newblock Neutral coated inclusions in conductivity and anti-plane elasticity.
\newblock {\em Proc. R. Soc. A: Math. Phys. Eng. Sci.}, 457(2012):1973--1997,
  2001.

\bibitem{Polya:1951:IIM}
G.~P{\'o}lya and G.~Szeg{\H{o}}.
\newblock {\em {Isoperimetric Inequalities in Mathematical Physics. (AM-27)}},
  volume~27 of {\em Annals of Mathematics Studies}.
\newblock Princeton University Press, 1951.

\bibitem{Pommerenke:1975:UF}
C.~Pommerenke.
\newblock {\em Univalent functions}.
\newblock Vandenhoeck \& Ruprecht, G\"{o}ttingen, 1975.

\bibitem{Pommerenke:1992:BBC}
C.~Pommerenke.
\newblock {\em {Boundary Behaviour of Conformal Maps}}, volume 299 of {\em
  Grundlehren der Mathematischen Wissenschaften}.
\newblock Springer-Verlag Berlin Heidelberg, 1992.

\bibitem{Sihvola:1999:EMF}
A.~Sihvola.
\newblock {\em {Electromagnetic Mixing Formulas and Applications}}, volume~47
  of {\em Electromagnetic Waves}.
\newblock Institution of Engineering and Technology, 1999.

\bibitem{Sihvola:1997:DPI}
A.H. Sihvola.
\newblock On the dielectric problem of isotrophic sphere in anisotropic medium.
\newblock {\em Electromagnetics}, 17(1):69--74, 1997.

\bibitem{Springer:1964:FEQ}
G.~Springer.
\newblock Fredholm eigenvalues and quasiconformal mapping.
\newblock {\em Acta Math.}, 111:121--142, 1964.

\bibitem{Suetin:1998:SFP}
P.~K. Suetin.
\newblock {\em Series of {F}aber polynomials}, volume~1 of {\em Analytical
  Methods and Special Functions}.
\newblock Gordon and Breach Science Publishers, 1998.

\bibitem{Verchota:1984:LPR}
G.~Verchota.
\newblock {Layer potentials and regularity for the Dirichlet problem for
  Laplace's equation in Lipschitz domains}.
\newblock {\em J. Funct. Anal.}, 59(3):572--611, 1984.

\bibitem{Wang:2013:MNC}
X.~Wang and P.~Schiavone.
\newblock A neutral multi-coated sphere under non-uniform electric field in
  conductivity.
\newblock {\em Z. Angew. Math. Phys.}, 64(3):895--903, 2013.

\bibitem{Zhou:2006:DEW}
X.~Zhou and G.~Hu.
\newblock Design for electromagnetic wave transparency with metamaterials.
\newblock {\em Phys. Rev. - Stat. Nonlinear Soft Matter Phys.}, 74(2), 2006.

\bibitem{Zhou:2007:AWT}
X.~Zhou and G.~Hu.
\newblock Acoustic wave transparency for a multilayered sphere with acoustic
  metamaterials.
\newblock {\em Phys. Rev. - Stat. Nonlinear Soft Matter Phys.}, 75(4), 2007.

\bibitem{Zhou:2008:EWT}
X.~Zhou, G.~Hu, and T.~Lu.
\newblock Elastic wave transparency of a solid sphere coated with
  metamaterials.
\newblock {\em Phys. Rev. B - Condens. Matter Mater. Phys.}, 77(2), 2008.

\end{thebibliography}
\bibliographystyle{plain}

\end{document}